\newtheorem{thm}{Theorem}[section]
\newtheorem*{thm*}{Theorem}
\newtheorem{lemma}[thm]{Lemma}
\newcommand{\beq}{\begin{equation}}
\newcommand{\eeq}{\end{equation}}
\newtheorem{remark}{Remark}
\newcommand{\kommentar}[1]{}
\newtheorem{theorem}{Theorem}%[section]
 \newtheorem{corollary}{Corollary}%[section]
\newtheorem{Proposition}{Proposition}
\newtheorem*{remark*}{Remark}
\definecolor{pink}{rgb}{1,.2,.6}
\definecolor{orange}{rgb}{0.7,0.3,0}
\definecolor{blue}{rgb}{.2,.6,.75}
\definecolor{green}{rgb}{.4,.7,.4}
\definecolor{purple}{RGB}{127,0,255}
\begin{document}
\numberwithin{equation}{section}

\title{A lower bound for classical Kloosterman sums and an application}

\author[Stephan]{Stephan Baier}
\address{Ramakrishna Mission Vivekananda University, Belur Math, Howrah, West Bengal 711202}
\email{stephanbaier2017@gmail.com}
 \author[Das]{Jishu Das}
 \address{Department of Applied Sciences and Humanities, Mathematics Section, 
 COEP Technological University, Wellesley Road, Shivajinagar, Pune, Maharashtra 411005, India.}
\email{dasj.maths@coeptech.ac.in}
\author[Jewel]{Jewel Mahajan}
\address{School of Mathematical Sciences, National Institute of Science Education and Research, Bhubaneswar, an OCC of Homi Bhabha National Institute, P. O. Jatni, Khurda 752050, Odisha, India}
\email{jewelmahajan421@gmail.com}

\keywords{Kloosterman sums, Petersson trace formula}
\subjclass[2020]{Primary 11L05,  11F72}

\date{\today}

\begin{abstract} 
We present a lower bound for the classical Kloosterman sum $S(a,b;c)$ where $(ab,c)=1$ and $c$ is an odd integer. We apply this lower bound for Kloosterman sums to derive an explicit lower bound in Petersson's trace formula, subject to a given condition. Consequently, we achieve a modified version of \cite[Theorem 1.7]{JS}, where weight $k$ and level $N$ are permitted to vary independently.  Using this modified version, we get a lower bound for a weighted trace of the Hecke operator $T_n$ acting on the space $S_k(N)$, of cusp forms of weight $k$ and level $N$ with $(n,N)=1$.
\end{abstract}
\maketitle 

\section{Introduction}
The classical Kloosterman sum $S(a,b;c)$ is given by $$S(a,b;c)=\sum_{\substack{x (\text{mod} \hspace{1 mm} c)\\ \gcd(x,c)=1}} e^{2\pi i\left(\frac{ax+b\overline{x}(c)}{c}\right)},$$   
where   $a,b$ are integers, $c$ is a natural number and $\overline{x}(c)$ denotes the multiplicative inverse of $x$   modulo $c$. 
The sums for the special case of  $a=0 $ or $b=0 $  are called Ramanujan sums. Kloosterman sums appear in various problems in number theory. One of the important places of its occurrence is in the Kuznetsov trace formula \cite[Theorem 1]{Kuz} and Petersson's trace formula.  
 Recently, the non-vanishing aspect of Kloosterman sums has been used in \cite[Theorem 2]{BDS} and \cite{JD} to obtain discrepancy results for the distribution of eigenvalues of Hecke operators. Whenever we have non-vanishing of a sum, a natural question to ask is whether we can obtain a lower bound for the same. 
To the best of our knowledge, no such lower bound for Kloosterman sums has been established in the literature. In the next section, we answer this question for $S(a,b;c),$ where $(ab,c)=1$.   
 First, we obtain the following lower bound for classical Kloosterman sums. 
 \begin{theorem}\label{Main1}
       Let $c$ be an odd natural number such that $c=du$, where $u$ is squarefree and $d$ is a powerful number with $(d,u)=1$. Let $a,b$ be integers such that $(ab,c)=1$ and $ab$ is a quadratic residue modulo each prime dividing $d$ (and hence, $ab$ is a square modulo $d$ by Hensel's lemma and Chinese remainder theorem). 
       Then 
       $$
       \left| S(a,b;c) \right|\geq \frac{2^{\omega(d)}}{\sqrt{d}}\left( \frac{1}{\left(\tau(u)\sqrt{u}\right)^{\phi(u)/\tau(u)-1}}\right),
       $$  
       where $\omega(d)$ is the number of distinct prime divisors of $d$,  $\tau(u)$ is the number of positive divisors of $u,$ and $\phi(u)$ denotes the Euler's phi function of $u$. 
       If $ab$ is a quadratic non-residue modulo a prime dividing $d$ (and hence $ab$ is not a square modulo $d$), then $S(a,b;c)=0$. 
   \end{theorem}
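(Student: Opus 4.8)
The plan is to reduce everything to a single normalized sum and then treat the powerful and squarefree parts of $c$ by genuinely different mechanisms. First I would record the elementary identity $S(a,b;c)=S(1,ab;c)$, obtained from the substitution $x\mapsto \overline{a}x$, so that all quantities depend only on $n:=ab \pmod c$. Writing $c=df=\prod_{p\mid d}p^{e_p}\prod_{q\mid f}q$ with each $e_p\ge 2$, twisted multiplicativity $S(1,n;c_1c_2)=S(\overline{c_2},n\overline{c_2};c_1)\,S(\overline{c_1},n\overline{c_1};c_2)$ (inverses taken to the complementary modulus) lets me factor $|S(a,b;c)|$ as a product of the $|S(1,\cdot;p^{e_p})|$ over $p\mid d$ and the $|S(1,\cdot;q)|$ over $q\mid f$. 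The key point is that in each factor the new argument is $n$ times a square unit, so its quadratic character modulo every prime is unchanged; this is what preserves the hypothesis on $ab$ after splitting.

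For the squarefree part I would argue via Galois conjugates. The sum $S(1,m;f)$ is an algebraic integer in $\mathbb{Q}(\zeta_f)$, and the action $\sigma_t\colon \zeta_f\mapsto \zeta_f^t$ of $\Gal(\mathbb{Q}(\zeta_f)/\mathbb{Q})\cong(\mathbb{Z}/f)^\times$ carries it to $S(1,t^2m;f)$. Its stabilizer contains $\{t:t^2\equiv 1\pmod f\}$, of order $2^{\omega(f)}=\tau(f)$, so there are at most $\phi(f)/\tau(f)$ distinct conjugates. If $S(1,m;f)\neq 0$ then \emph{every} conjugate is nonzero, since field automorphisms preserve non-vanishing; hence the field norm $\mathrm{N}(S(1,m;f))$ is a nonzero rational integer, of absolute value at least $1$. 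Bounding each of the remaining conjugates by the Weil estimate $|S(a,b;q)|\le 2\sqrt q$ combined with multiplicativity (so that $|S(\cdot;f)|\le \tau(f)\sqrt f$), and then solving for the factor at $\sigma=\mathrm{id}$, yields exactly $|S(1,m;f)|\ge (\tau(f)\sqrt f)^{-(\phi(f)/\tau(f)-1)}$.

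For the powerful part I would use the exact evaluation of Kloosterman sums to odd prime-power moduli. Completing the square (equivalently, a stationary-phase/Hensel argument) shows that for $e\ge 2$ the sum $S(1,m;p^e)$ vanishes when $m$ is a quadratic non-residue modulo $p$, which accounts for the final assertion of the theorem, and that otherwise its modulus is $2p^{e/2}|\cos(4\pi v/p^e)|$ (or the corresponding sine, according to $p^e\bmod 4$), where $v^2\equiv m\pmod{p^e}$. Writing the surviving factor as $p^{e/2}$ times the algebraic integer $\eta+\eta^{-1}$ with $\eta=\zeta_{p^e}^{2v}$ a primitive $p^e$-th root of unity, I would again invoke a norm identity: the product of the conjugates of $\eta+\eta^{-1}$ over the maximal real subfield is explicitly computable and equals $\pm 1$ in the cosine case (it reduces to $\Phi_{p^e}(-1)=1$). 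This pins down the size of the nonvanishing factor and is the source of the prime-power contribution to the bound.

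The step I expect to be the main obstacle is precisely this extraction of the explicit constant for the powerful part. Unlike the squarefree case, where the Weil bound on the conjugates suffices, here the trigonometric factor $\cos(4\pi v/p^e)$ can be genuinely small, so a crude lower bound is too lossy; one must exploit the exact norm identity for $\eta+\eta^{-1}$, or, alternatively, a careful estimate showing that the angle $4\pi v/p^e$ cannot approach an odd multiple of $\pi/2$ too closely (using that $p$ is odd), to control it and obtain the claimed factor $2/\sqrt p$ per prime $p\mid d$. Once the prime-power factors are bounded, multiplying them against the squarefree factor from the norm argument gives the stated inequality, while the quadratic-residue dichotomy at each $p\mid d$ gives the vanishing criterion.
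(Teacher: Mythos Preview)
Your overall architecture---twisted multiplicativity, then a Galois/norm argument for the squarefree factor and the explicit Sali\'e-type evaluation for odd prime powers---is exactly the paper's. The squarefree paragraph is essentially Proposition~\ref{prime}, with one omission: you assume $S(1,m;f)\neq 0$ but never justify it. The paper supplies this by observing that for a prime modulus $p$ the sum is a $\mathbb{Q}$-linear combination of the primitive $p$-th roots of unity with not-all-zero coefficients, hence nonzero by their linear independence, and then invokes multiplicativity for general squarefree $f$.

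The real gap is in the prime-power step. Computing the norm of $\eta+\eta^{-1}$ from the maximal real subfield to $\mathbb{Q}$ does give $\pm 1$, but that only yields
\[
|\eta+\eta^{-1}|\ \ge\ \Bigl(\prod_{\text{other conjugates}}|\,\cdot\,|\Bigr)^{-1}\ \ge\ 2^{-(\phi(q)/2-1)},
\]
i.e.\ $|S(a,b;q)|\ge \sqrt{q}\cdot 2^{\,1-\phi(q)/2}$, which is exponentially weaker than the required $2/\sqrt{q}$ once $p\ge 5$. So the norm identity does not ``pin down the size'' sharply enough; it merely certifies nonvanishing. (The sine case $q\equiv 3\pmod 4$, which you leave unaddressed, has the same defect.) The paper's argument---precisely your ``alternative''---is the one that actually delivers the constant: write
\[
\Bigl|\cos\tfrac{4\pi r}{q}\Bigr|=\Bigl|\sin\pi\Bigl(\tfrac12-\tfrac{4r}{q}\Bigr)\Bigr|\ge 2\Bigl\|\tfrac{q-8r}{2q}\Bigr\|,\qquad \Bigl|\sin\tfrac{4\pi r}{q}\Bigr|\ge 2\Bigl\|\tfrac{4r}{q}\Bigr\|,
\]
and use $(ab,p)=1$ together with $l^{2}\equiv ab\pmod q$ to rule out $q\mid 8r$ and $2q\mid q-8r$; since these rationals have denominator $q$ or $2q$, this forces $\|\cdot\|\ge 1/(2q)$ and hence $|S(a,b;q)|\ge 4\sqrt{q}\cdot\tfrac{1}{2q}=2/\sqrt{q}$. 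You should promote this Diophantine estimate to the main line of the prime-power argument and drop the norm computation there.
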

   
   Let $S_k(N)$ denote the space of cusp forms of even weight $k\geq 2$ and level $N.$ The discrepancy between two probability measures involving  eigenvalues of cusp forms
on a given closed interval has been a well-studied topic in number theory.
Upper bounds (see \cite[Theorem 2]{MSeffective}, \cite[Theorem 1]{TW}, \cite[Theorem 2]{SZ}) and lower bound estimates (see \cite[Theorem 1.1, Theorem 1.6]{JS}, \cite[Theorem 1, Theorem 2]{JD}) of the
discrepancy of two probability measures are of particular interest due to its various
applications to other problems. For upper bounds, the key technique is the Eichler-Selberg trace formula (\cite{Sel}) and for lower bounds, one of the key ingredients is  Petersson's trace formula (\cite{PH}).
 The Bessel functions of the first kind $J_a(x)$ for $a\geq 0$, is defined by the following  power series representation 
$$
J_a(x)=\sum_{j=0}^{\infty } \frac{(-1)^j}{\Gamma(j+1)\Gamma(a+j+1)}\left(\frac{x}{2}\right)^{a+2j},
$$
where $\Gamma(x) $ denotes the Gamma function evaluated at $x$. 
We obtain the following version of Petersson's trace formula for the space $S_k(N)$ as an application to Theorem~\ref{Main1}.  
\begin{theorem}\label{main theorem sec 1}
  Let $m,n,N$ be natural numbers with $(mn,N)=1$ and $N$ odd. Let $H_0=1-\frac{4}{9}-\log\left(\frac{9}{5}\right)\approx -0.03223$, $D_0 \in \left(e^{H_0},1\right)$ be a real number ($e^{H_0}\approx 0.96828)$. Let  $N=du$ where $d$ is powerful and $u$ is squarefree, where $(d,u)=1$. Let $k_0(N)= 2+ \left\lfloor\frac{\log 7-\log H(N)}{\log A_0}\right\rfloor,$ where $A_0=D_0e^{-H_0},$ $H(N)=2^{\omega(N)} \frac{G(N)}{N^{3/2}}$ with $G(N)= \frac{u}{(\tau(u)^{2} u)^{\phi(u)/2\tau(u)}}.$  Let  $k\geq \max(22, k_0(N)) $,
 $\frac{4\pi\sqrt{mn}}{N}\in \left[ D_0(k-1), (k-1) \right] $ and assume that $mn$ is a quadratic residue modulo each prime dividing $d$. 
Let  $\Delta_{k, N}(m,n)$ be  as defined in equation \eqref{DeltakN}, then 
    $$\left| \Delta_{k,N}(m,n)-\delta(m,n)  \right|\geq   7.99 \pi J_{k-1}(k-1) e^{(k-1)\left(1-\frac{4}{9}-\log(\frac{9}{5})\right)}. 
    $$ 
\end{theorem}
Some key features of Theorem~\ref{main theorem sec 1} are as follows:
\begin{itemize}
\item Theorem~\ref{Main1}  gives us an explicit lower bound for a Kloosterman sum appearing in $\Delta_{k, N}(m,n)$. We then use explicit bounds for the $J$-Bessel function as given in Lemma~\ref{Bessel1} and Lemma~\ref{Bessel2}. We emphasise that  Lemma~\ref{Bessel2}, which follows from a direct application of \cite[Theorem 1]{KI}, is crucial for deriving an explicit lower bound for $|\Delta_{k,N}(m,n)-\delta(m,n)|$. One of the main purposes of \cite{KI} is to derive an explicit expression for $J_k(x)$ for a transition range of $x$  and for any $k\geq 2.$ 
    \item The theorems \cite[Theorem 1.7]{JS} and \cite[Theorem 4]{JD}, assume $N$ to be fixed and provide an asymptotic for $|\Delta_{k,N}(m,n)-\delta(m,n)|,$ as $k\rightarrow \infty. $  However, 
    Theorem~\ref{main theorem sec 1} is applicable for every $k$ and $N$, provided they satisfy certain hypotheses. For a fixed level $N,$ \cite[Corollary 2.3]{ILS} gives an asymptotic for $\Delta_{k,N}(m,n)$ with the condition $\frac{4\pi\sqrt{mn}}{N}\leq \frac{k}{3}, $ whereas Theorem~\ref{main theorem sec 1} gives us an asymptotic for the condition $D_0(k-1)\leq \frac{4\pi\sqrt{mn}}{N}\leq (k-1).$  
    \item The size of the interval in which $\frac{4\pi\sqrt{mn}}{N}$ lies is $(1-D_0)k$ as compared to $2k^{\frac{1}{3}}$ of \cite[Theorem 1.7]{JS}, which weakens the lower bound to  $e^{(k-1)\left(1-\frac{4}{9}-\log(\frac{9}{5})\right)}$ as compared to $k^{-\frac{1}{3}}$. However, for a shorter range given by $\left[(k-1),(k-1)+(k-1)^\frac{1}{3} \right],$  we obtain back the lower bound of order  $(k-1)^{-\frac{1}{3}},$ which is given by Theorem~\ref{main theorem sec 2}. 
\end{itemize}

The following corollary is immediate. 
\begin{corollary}\label{cor 1}
    Let $0< \epsilon <0.4473.$  Let $m,n,N$ be natural numbers with $(mn,N)=1$ and $N$ odd. Let $H_0,D_0,k_0(N)$ be as defined in Theorem~\ref{main theorem sec 1}.
    % 
    % satisfying $\left( D_0 e^{-H_0} \right)^{k-1}\geq \frac{7}{H(N)}$. 
     Let $N=du$ where $d$ is powerful and $u$ is squarefree with $(d,u)=1$.  If  $\frac{4\pi\sqrt{mn}}{N}\in \left[D_0(k-1), (k-1) \right] $ and $mn$ is a quadratic residue modulo each prime dividing $d$, 
     then  for all $k\geq \max\left( k_0(N), 1+\left( \frac{3}{\epsilon}\right)^{3/2}\right), $ we have 
  $$\left| \Delta_{k,N}(m,n)-\delta(m,n)  \right|\geq    
   7.99 \pi\frac{(0.4473-\epsilon)}{(k-1)^{1/3}} e^{(k-1)\left(1-\frac{4}{9}-\log(\frac{9}{5})\right)} .
    $$
\end{corollary}

\begin{theorem}\label{main theorem sec 2}
 Let $m,n,N$ be natural numbers with $(mn,N)=1$ and $N$ odd.   Let $N=du$ where $d$ is powerful and $u$ is squarefree, where $(d,u)=1$. Let $\frac{4\pi\sqrt{mn}}{N}\in \left[ (k-1), (k-1)+(k-1)^{\frac{1}{3}} \right] $ and assume that $mn$ is a quadratic residue modulo each prime that divides the powerful part of $N$. Let $E_0$ be a positive constant less than $0.327H(N)$ (as defined in Theorem~\ref{main theorem sec 1}) and $k_{1}(N):=2+\left\lfloor \frac{\log (1.632)-\log (0.327H(N)-E_0)}{-H_0}\right\rfloor$. Then,  for all $k\geq \max(180, k_{1}(N)),$ we have 
    $$\left| \Delta_{k,N}(m,n)-\delta(m,n)  \right|\geq    \frac{2\pi E_0}{(k-1)^{\frac{1}{3}}}.$$    
\end{theorem}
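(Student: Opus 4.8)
The plan is to start from the Petersson trace formula underlying \eqref{DeltakN}, which writes the off-diagonal part of the formula as a weighted sum of Kloosterman sums against $J$-Bessel values,
\[
\Delta_{k,N}(m,n)-\delta(m,n)=2\pi i^{-k}\sum_{c\equiv 0\,(N)}\frac{S(m,n;c)}{c}\,J_{k-1}\!\left(\frac{4\pi\sqrt{mn}}{c}\right),
\]
and to extract the single term $c=N$ as the dominant contribution. Applying the reverse triangle inequality, I would bound $|\Delta_{k,N}(m,n)-\delta(m,n)|$ below by the modulus of the $c=N$ term minus the tail running over $c=jN$ with $j\geq 2$. The argument then splits cleanly into a lower bound for the main term and an upper bound for the tail.

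For the main term, the hypothesis that $mn$ is a quadratic residue modulo each prime dividing the powerful part $d$ of $N$ is exactly what licenses Theorem \ref{Main1} with $(a,b,c)=(m,n,N)$. Using that $d$ is powerful (so $\mathrm{rad}(d)\leq\sqrt d$, whence $\prod_{p\mid d}\frac{2}{\sqrt p}\geq 2^{\omega(d)}d^{-1/4}$) and that $f$ is squarefree (so $\tau(f)=2^{\omega(f)}$), a short computation shows that the bound of Theorem \ref{Main1}, divided by $N=df$, simplifies to exactly $|S(m,n;N)|/N\geq H(N)$. Since $\frac{4\pi\sqrt{mn}}{N}$ lies in the transition window $[(k-1),(k-1)+(k-1)^{1/3}]$, I would then invoke Lemma \ref{Bessel2} to obtain the Airy-type lower bound $J_{k-1}\!\left(\frac{4\pi\sqrt{mn}}{N}\right)\geq 0.327\,(k-1)^{-1/3}$, valid once $k\geq 180$; here $0.327$ is a safe explicit value below the limit $0.447\ldots$ of $(k-1)^{1/3}J_{k-1}(k-1)$, and this is where the $(k-1)^{-1/3}$ of the conclusion is born. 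Combining, the main term is at least $2\pi\cdot 0.327\,H(N)\,(k-1)^{-1/3}$.

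For the tail I would use the Weil bound $|S(m,n;c)|\leq\tau(c)(m,n,c)^{1/2}c^{1/2}$ together with the fact that for every $c=jN$ with $j\geq 2$ the argument $\frac{4\pi\sqrt{mn}}{c}$ has dropped to at most roughly $(k-1)/2$, placing it in the exponentially small regime of $J_{k-1}$. Feeding the explicit estimate of Lemma \ref{Bessel1} into the sum over $j\geq 2$ and summing the resulting rapidly decaying series, I expect the tail to be controlled by $2\pi\cdot 1.632\,e^{(k-1)H_0}(k-1)^{-1/3}$. The definition of $k_1(N)$ is then precisely the threshold making $1.632\,e^{(k-1)H_0}\leq 0.327\,H(N)-E_0$: since $H_0<0$, solving for $k$ gives $k-1\geq\bigl(\log(1.632)-\log(0.327H(N)-E_0)\bigr)/(-H_0)$, which is $k_1(N)$ up to the floor, with the hypothesis $E_0<0.327H(N)$ guaranteeing the logarithm argument is positive.

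Finally, for $k\geq\max(180,k_1(N))$ the tail is at most $2\pi(0.327H(N)-E_0)(k-1)^{-1/3}$, so subtracting it from the main term leaves $|\Delta_{k,N}(m,n)-\delta(m,n)|\geq 2\pi E_0(k-1)^{-1/3}$, as claimed. The hard part will be the tail estimate: one must make the bound on $\sum_{j\geq 2}$ uniform in $N$ and produce exactly the shape $e^{(k-1)H_0}(k-1)^{-1/3}$ with the explicit constant $1.632$, which forces the use of the explicit (rather than merely asymptotic) Bessel bounds of Lemma \ref{Bessel1} and careful handling of the divisor factor $\tau(c)$ and the gcd $(m,n,c)$ in the Weil bound across the entire range $c\geq 2N$. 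Everything else reduces to a lower bound for a single known term and is routine.
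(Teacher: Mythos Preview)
Your plan matches the paper's proof: isolate the $c=N$ term, bound it below via Theorem~\ref{Main1} together with an Airy-type Bessel estimate giving $J_{k-1}\bigl(\tfrac{4\pi\sqrt{mn}}{N}\bigr)\ge 0.327(k-1)^{-1/3}$, reuse the tail bound \eqref{|B|} from the proof of Theorem~\ref{main theorem sec 1}, and then observe that $k\ge k_1(N)$ is exactly the condition $1.632\,e^{H_0(k-1)}\le 0.327H(N)-E_0$.

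Two small corrections. First, Lemma~\ref{Bessel2} as stated only bounds $J_\nu(\nu)$, not $J_\nu$ at a shifted argument; for $\tfrac{4\pi\sqrt{mn}}{N}=(k-1)+t(k-1)^{1/3}$ with $t\in[0,1]$ the paper invokes \cite[Theorem~1]{KI} directly, obtaining $J_{k-1}\bigl((k-1)+t(k-1)^{1/3}\bigr)=2^{1/3}Ai(-2^{1/3}t)(k-1)^{-1/3}+\theta\,\tfrac{4t^{9/4}+21}{7(k-1)}$, and then uses $Ai(-2^{1/3}t)\ge Ai(0)$ on $[0,1]$ together with $k\ge 180$ to extract the constant $0.327$. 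Second, for the tail the paper does \emph{not} use Weil: it uses the trivial bound $|S(m,n;bN)|\le\phi(bN)\le bN$, so the Kloosterman factor disappears and the tail reduces to $\sum_{b\ge2}\bigl|J_{k-1}(\cdots)\bigr|$, handled exactly as in \eqref{j}--\eqref{|B|}. Your worry about controlling $\tau(c)$ and the gcd is therefore moot. The constant $1.632$ then arises by combining $|B|\le 6.01\pi\,J_{k-1}(k-1)e^{H_0(k-1)}$ with the upper bound $J_{k-1}(k-1)\le 0.543(k-1)^{-1/3}$ coming from the right-hand inequality in Lemma~\ref{Bessel2} at $k\ge 180$.
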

\begin{remark}\label{remark k1/3}
Let $C_0$ be a positive constant less than $0.327,$ and $k_{1}(N):=2+\left\lfloor \frac{\log (1.632 /C_0)-\log H(N)}{-H_0}\right\rfloor$. With the assumptions as in Theorem~\ref{main theorem sec 2}, we have 
    $$\left| \Delta_{k,N}(m,n)-\delta(m,n)  \right|\geq    \frac{2\pi (0.327-C_0) H(N)}{(k-1)^{\frac{1}{3}}},$$
    for all $k\geq \max(180, k_{1}(N)).$
\end{remark}

\subsection*{Organisation of the article} The article is organised as follows. In Section \ref{sec2}, we discuss proofs for  the lower bound of classical Kloosterman sums and for Corollary ~\ref{Kloosterman general}. 
In Sections \ref{sec3}, we prove Theorems \ref{main theorem sec 1}, \ref{main theorem sec 2} and Corollary ~\ref{D1n}. 
In the very end of Section \ref{sec3}, we provide a table for the values of $H(N),k_0(N) $ and $k_1(N)$ for some specific values of $N.$
\section{Lower bound for Kloosterman sums}\label{sec2}
In this section, we primarily prove Theorem~\ref{Main1} by considering squarefree and odd prime power cases separately, and we consider the general case in Section \ref{gs}. 
\subsection{Squarefree case}
\begin{Proposition}\label{prime}
Let $c $ be a squarefree positive integer such that $(a,c)=1.$ Then 
$$|S(a,b;c)|\geq \frac{1}{\left(\tau(c)\sqrt{c}\right)^{\phi(c)/\tau(c)-1}},$$ 
\end{Proposition}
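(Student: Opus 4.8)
The plan is to exploit the fact that $S(a,b;c)$ is an algebraic integer in the cyclotomic field $\Q(\zeta_c)$ and to bound it from below by controlling the norm of its full Galois orbit. First I would normalise the sum: since $(a,c)=1$, the substitution $x\mapsto \overline{a}x$ gives $S(a,b;c)=S(1,ab;c)$, so it suffices to bound $|S(1,t;c)|$ from below with $t=ab$. Writing $\alpha=S(1,t;c)=\sum_{x}\zeta_c^{\,x+t\overline{x}}\in\mathbb{Z}[\zeta_c]$, this is visibly an algebraic integer, which is what lets the norm argument get off the ground.

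Next I would compute the action of $\Gal(\Q(\zeta_c)/\Q)\cong(\mathbb{Z}/c\mathbb{Z})^{\times}$ on $\alpha$. For a unit $u$ the automorphism $\sigma_u\colon \zeta_c\mapsto\zeta_c^{u}$ sends $\alpha$ to $S(u,ut;c)$, and after the substitution $x\mapsto \overline{u}x$ this equals $S(1,u^2 t;c)$. Hence every Galois conjugate of $\alpha$ is again a Kloosterman sum of the same shape, $S(1,u^2 t;c)$, and the stabiliser of $\alpha$ contains the subgroup $\mu=\{u:u^2\equiv 1 \pmod c\}$. Because $c$ is squarefree and odd, $|\mu|=2^{\omega(c)}=\tau(c)$, so the degree $d=[\Q(\alpha):\Q]$, which is the size of the Galois orbit, satisfies $d\le \phi(c)/\tau(c)$.

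The crucial step is the norm identity: the product of the $d$ distinct conjugates of $\alpha$ equals $N_{\Q(\alpha)/\Q}(\alpha)\in\mathbb{Z}$, and whenever $S(a,b;c)\neq 0$ this integer is nonzero, hence of absolute value at least $1$. I would then bound each conjugate from above by the Weil--Estermann bound $|S(1,s;c)|\le \tau(c)\sqrt{c}$, which is valid for all $s$ since one of the two arguments is a unit. Isolating the factor $|\alpha|$ from the remaining $d-1$ conjugates yields
$$1\le |N_{\Q(\alpha)/\Q}(\alpha)|\le |\alpha|\,\bigl(\tau(c)\sqrt{c}\bigr)^{\,d-1},$$
so that $|\alpha|\ge \bigl(\tau(c)\sqrt{c}\bigr)^{-(d-1)}$. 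Since $\tau(c)\sqrt{c}\ge 1$ and $d-1\le \phi(c)/\tau(c)-1$, this is at least $\bigl(\tau(c)\sqrt{c}\bigr)^{-(\phi(c)/\tau(c)-1)}$, which is the claimed bound.

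I expect the main obstacle to be the bookkeeping in the second step: verifying precisely that the conjugates are exactly the sums $S(1,u^2 t;c)$, and that the square-roots-of-unity subgroup accounts for the factor $\tau(c)=2^{\omega(c)}$ in the exponent. One must also note that only the \emph{upper} bound $d\le\phi(c)/\tau(c)$ on the orbit size is needed (any accidental coincidences among conjugates only strengthen the estimate), rather than the exact degree, and that the Weil--Estermann bound is invoked in the form requiring merely that one argument be coprime to $c$.
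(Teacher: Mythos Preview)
Your approach is the same as the paper's: regard $\alpha=S(a,b;c)$ as an algebraic integer in $\Q(\zeta_c)$, identify its Galois conjugates as the sums $S(an^2,b;c)=S(1,(un)^2t;c)$, use that the square-roots-of-unity subgroup has order $\tau(c)$ for squarefree $c$, bound the norm below by $1$, and bound the remaining conjugates above by Weil. The paper takes the norm from $\Q(\zeta_c)$ rather than from $\Q(\alpha)$, so it gets $|S(a,b;c)|^{\tau(c)}$ on the left and $(\tau(c)\sqrt{c})^{\phi(c)-\tau(c)}$ on the right; this is your inequality raised to the $\tau(c)$-th power, a cosmetic difference.

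There is one genuine omission. You write ``whenever $S(a,b;c)\neq 0$ this integer is nonzero'', but you never establish non-vanishing, and without it the norm argument yields nothing. The paper does supply this step: for prime $c$ one writes $S(a,b;c)=\sum_{j=0}^{c-1} m_j\zeta_c^{\,j}$ with $m_j\in\mathbb{Z}_{\ge 0}$ and $\sum_j m_j=c-1$; since the only $\Q$-linear relation among $\zeta_c^{0},\dots,\zeta_c^{c-1}$ is $\sum_j\zeta_c^{\,j}=0$, vanishing would force all $m_j$ equal, hence equal to $(c-1)/c\notin\mathbb{Z}$. Multiplicativity of Kloosterman sums then gives $S(a,b;c)\neq 0$ for all squarefree $c$. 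This is the missing ingredient in your sketch.

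A minor remark: you insert ``and odd'' to get $|\mu|=2^{\omega(c)}=\tau(c)$. The paper's assertion $\rho(c)=\tau(c)$ tacitly uses the same hypothesis; since the main theorem assumes $c$ odd anyway, this is harmless.
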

\begin{proof}
Let $\alpha=S(a,b;c)\in \mathbb{Q}(\zeta_{c})$, where $\zeta_c=e^{2\pi i/c}$.
An element of Gal$(\mathbb{Q}(\zeta_{c})/\mathbb{Q})$
sends $\zeta_c$ to $\zeta_c^{n}$ where $(n,c)=1.$ Hence, the conjugates of $\alpha$ are the Kloosterman sums $S(an,bn;c),$ where $(n,c)=1.$ In fact, we have $S(an,bn;c)=S(an^{2},b;c)=S(a,bn^{2};c).$
 For $c$ prime,  $S(a,b;c)$  is a linear combination of at most $c-1$ $c$-th roots of unity, but they are necessarily linearly independent over $\mathbb{Q}$. So if the linear combination becomes $0$, all the coefficients need to be $0$ which is not the case. 
Hence $S(a,b;c)\neq 0$ for $c$ prime. Using the multiplicativity of Kloosterman sums  (see Lemma~\ref{multiplicativity}), $S(a,b;c)\neq 0$ for squarefree $c$. 
Let $\mathcal{N}(\alpha)$ denote the norm of $\alpha\in \mathbb{Q}(\zeta_{c})$ over $\mathbb{Q}.$
Since $\alpha=S(a,b;c)$ is a non-zero algebraic integer in $\mathbb{Q}(\zeta_{c})$, its norm $\mathcal{N}(\alpha)$ is a non-zero integer and therefore,
$$1\leq |\mathcal{N}(\alpha)|=|\mathcal{N}(S(a,b;c))|=\prod_{\substack{(n,c)=1\\1 \leq  n \leq c}} |S(an^{2},b;c)|.$$
Hence, 
$$\displaystyle\prod_{\substack{(n,c)=1\\1\leq  n \leq c}} |S(an^{2},b;c)| \geq 1.$$
Let $\rho(c)$ be the number of solutions modulo $c$ of $x^2\equiv 1$ (mod $c$). We have $\rho(c)=\tau(c) $ for squarefree $c$. 
 Using Weil's bound together with $(a,c)=1$, we have
$$\frac{1}{|S(a,b;c)|^{\tau(c)}} \leq \displaystyle\prod_{\substack{(n,c)=1\\1< n \leq c\\n^{2}\not\equiv 1 (c) }} |S(an^{2},b;c)| \leq \displaystyle\prod_{\substack{(n,c)=1\\1< n \leq c\\n^{2}\not\equiv 1 (c)}} \left( \tau(c)\sqrt{c}\right)=\left(\tau(c)\sqrt{c}\right)^{\phi(c)-\tau(c)}.$$ 
Hence we obtain 
\begin{equation*}
|S(a,b;c)|\geq \frac{1}{\left(\tau(c)\sqrt{c}\right)^{\phi(c)/\tau(c)-1}}.
\end{equation*}
\end{proof}

\subsection{Odd prime power case}
Let $q$ be odd, and $\epsilon_q$ be $1$ or $i$ depending on whether $q \equiv$ $1$ or $-1$ modulo $4.$ For odd prime power moduli $q$, we have the following explicit evaluation of $S(a,b;q)$.

 \begin{Proposition}\cite[Chapter 12, Exercise 1]{IK}\label{prop 2}
   Let $q=p^\beta$ for some prime $p$ and $\beta\geq 2. $  Suppose $p$ does not divide $2ab$. If $ab$ is a quadratic residue modulo $p$, then 
   \begin{equation}
      S(a,b;q)= 2 \left( \frac{l}{q}\right) \sqrt{q} \text{ Re}  \left( \epsilon_q e\left(\frac{2l}{q}\right) \right),
   \end{equation}
   where $l^2\equiv ab$ (mod $q)$. If $ab$ is a quadratic non-residue modulo $p$, then we have $S(a,b;q)=0.$
 \end{Proposition}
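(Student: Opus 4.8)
The plan is to evaluate $S(a,b;q)$ for $q=p^\beta$, $\beta\geq 2$, by the $p$-adic stationary phase method: localize the full sum at the critical points of the phase $x\mapsto ax+b\overline{x}$, and then treat the parities of $\beta$ separately. Write $\beta=2\alpha$ or $\beta=2\alpha+1$ and substitute $x=x_0+p^{\lceil\beta/2\rceil}t$, where $x_0$ ranges over units modulo $p^{\lceil\beta/2\rceil}$ and $t$ ranges modulo $p^{\lfloor\beta/2\rfloor}$. Since $p\nmid 2ab$, the inverse expands as $\overline{x}\equiv\overline{x_0}-p^{\lceil\beta/2\rceil}t\,\overline{x_0}^2+\cdots\pmod q$, so the phase becomes $ax_0+b\overline{x_0}+p^{\lceil\beta/2\rceil}t\left(a-b\overline{x_0}^2\right)+\cdots$. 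Summing over $t$ kills every $x_0$ except those satisfying the critical-point congruence $a x_0^2\equiv b\pmod{p^{\lfloor\beta/2\rfloor}}$ and contributes a factor $p^{\lfloor\beta/2\rfloor}$. By Hensel's lemma this congruence is solvable, with exactly two roots $\pm r$, precisely when $ab$ is a quadratic residue modulo $p$; if $ab$ is a non-residue there are no critical points and the sum telescopes to $0$, which proves the second assertion at once.

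For even $\beta=2\alpha$ the surviving $x_0$ are the two roots modulo $p^\alpha$. Choosing the Hensel lift of each root with $a x_0^2\equiv b\pmod q$ makes $b\overline{x_0}\equiv a x_0\pmod q$, so the phase collapses to $e\!\left(2 a x_0/q\right)$. Setting $l=a x_0$ gives $l^2\equiv ab\pmod q$, and the two roots yield $\pm l$, so $S(a,b;q)=p^\alpha\left(e(2l/q)+e(-2l/q)\right)=2\sqrt q\,\mathrm{Re}\,e(2l/q)$. Since $q=p^{2\alpha}\equiv 1\pmod 4$ and $l$ is a unit, one has $\epsilon_q=1$ and $\Leg{l}{q}=1$, so this already matches the claimed formula in the even case.

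The odd case $\beta=2\alpha+1$ is the heart of the matter. Here each root $r$ modulo $p^\alpha$ has $p$ lifts $x_0=r+p^\alpha j$, $j$ modulo $p$, inside the summation range modulo $p^{\alpha+1}$. Expanding the phase one order further in $j$ and using the cancellation of the linear term at the critical point, the $j$-dependence turns into a genuine one-variable quadratic Gauss sum $\sum_{j\bmod p}e\!\left((Aj^2+Bj)/p\right)$ with $A\equiv a\overline{r}$. Completing the square evaluates this to $\Leg{A}{p}\epsilon_p\sqrt p\cdot e(\ast/p)$, where the residual phase $e(\ast/p)$ simply upgrades $ar$ to a full-precision square root $l$ of $ab$ modulo $q$. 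Multiplying the Gauss-sum factor by the $p^\alpha$ from the first step produces $\sqrt q$, and carrying $\Leg{a\overline{r}}{p}\epsilon_p$ through quadratic reciprocity reassembles $\Leg{l}{q}\epsilon_q$; once more the two roots $\pm r$ give $\pm l$, so the two critical points combine into $2\,\mathrm{Re}\!\left(\epsilon_q e(2l/q)\right)$, as required.

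I expect the main obstacle to lie entirely in the bookkeeping of this odd case: tracking the phase to exactly the power of $p$ needed so that the linear terms in $t$ and in $j$ cancel at the critical point, verifying that the completed-square shift in the Gauss sum is absorbed cleanly into the definition of $l$ with $l^2\equiv ab\pmod q$, and checking that the Gauss-sum constants $\epsilon_p$, $\Leg{a\overline{r}}{p}$, $\sqrt p$ recombine into precisely $\epsilon_q$, $\Leg{l}{q}$, $\sqrt q$. By contrast, the even case and the vanishing for non-residues are routine once the stationary-phase reduction is set up.
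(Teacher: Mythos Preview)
The paper does not supply a proof of this proposition at all: it is quoted verbatim as \cite[Chapter 12, Exercise 1]{IK} and used as a black box in the proof of Proposition~\ref{primepower}. Your $p$-adic stationary-phase argument is the standard way to establish this formula and is correct in outline; the splitting $x=x_0+p^{\lceil\beta/2\rceil}t$, the localization at $ax_0^2\equiv b$, the even-$\beta$ evaluation, and the odd-$\beta$ reduction to a quadratic Gauss sum modulo $p$ are all set up properly, and your identifications $\epsilon_q=\epsilon_p$, $\Leg{l}{q}=\Leg{l}{p}$, $p^\alpha\sqrt{p}=\sqrt{q}$ in the odd case check out.

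One small comment: no quadratic reciprocity is needed to match the Gauss-sum constant $\Leg{a\overline{r}}{p}$ with $\Leg{l}{q}$. Since $l\equiv ar\pmod p$ (both square to $ab$), one has $\Leg{l}{p}=\Leg{ar}{p}=\Leg{a}{p}\Leg{r}{p}=\Leg{a\overline{r}}{p}$ directly, and $\Leg{l}{q}=\Leg{l}{p}^{\beta}=\Leg{l}{p}$ since $\beta$ is odd. The rest of your bookkeeping (that the completed-square shift in the Gauss sum is absorbed into upgrading $ar+b\overline{r}$ to $2l$ modulo $q$) is genuine but routine once one fixes $r$ to be the Hensel lift satisfying $ar^2\equiv b\pmod{q}$.
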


 From the above Proposition, we deduce the following lower bound for $|S(a,b;q)|,$ if $q$ is an odd prime power.
 
\begin{Proposition}\label{primepower}
   Let $q=p^\beta$ for some prime $p$ and $\beta\geq 2. $
   Suppose $p$ does not divide $2ab$. If $ab$ is a quadratic residue modulo $p$, then 
     $$ |S(a,b;q)|\geq \frac{2}{\sqrt{q}}.$$
     If $ab$ is a quadratic non-residue modulo $p$, then $S(a,b;q)=0$.
\end{Proposition}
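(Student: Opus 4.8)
The plan is to derive Proposition \ref{primepower} directly from the explicit evaluation in Proposition \ref{prop 2}. Since the non-residue case is immediate (the cited formula gives $S(a,b;q)=0$), the entire content lies in the residue case, where we have the identity
\[
S(a,b;q)= 2 \left( \frac{l}{q}\right) \sqrt{q}\; \mathrm{Re}\!\left( \epsilon_q\, e\!\left(\tfrac{2l}{q}\right)\right),
\]
with $l^2\equiv ab \pmod q$ and $\left(\frac{l}{q}\right)=\pm 1$. Taking absolute values and using that the Legendre--Jacobi symbol has modulus one, the factor $2\sqrt{q}$ pulls out cleanly, so the task reduces to bounding $\bigl|\mathrm{Re}\!\left(\epsilon_q\, e(2l/q)\right)\bigr|$ from below.

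The key step is therefore to show that this cosine term is not too small; specifically I would aim for
\[
\left|\mathrm{Re}\!\left(\epsilon_q\, e\!\left(\tfrac{2l}{q}\right)\right)\right|\ \geq\ \frac{1}{q},
\]
which upon substitution yields $|S(a,b;q)|\geq 2\sqrt{q}\cdot \tfrac{1}{q}=\tfrac{2}{\sqrt{q}}$, exactly the claimed bound. To establish this, write $\epsilon_q\, e(2l/q)=\exp\!\left(2\pi i\left(\tfrac{2l}{q}+\theta\right)\right)$, where $\theta=0$ when $q\equiv 1\pmod 4$ and $\theta=\tfrac14$ when $q\equiv 3\pmod 4$ (since $\epsilon_q=i=e(1/4)$ in that case). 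The real part is $\cos\!\left(2\pi\left(\tfrac{2l}{q}+\theta\right)\right)$, so I must bound this cosine away from zero. The cosine vanishes precisely when $\tfrac{2l}{q}+\theta$ is a half-integer plus one quarter, i.e.\ when the argument $\tfrac{2l}{q}+\theta$ lies in $\tfrac14+\tfrac12\Z$; the plan is to observe that because $l$ is an integer and $q$ is odd, the quantity $\tfrac{2l}{q}+\theta$ can land in $\tfrac14+\tfrac12\Z$ only if a strict divisibility forces equality, and otherwise its distance to the nearest such point is at least of order $1/q$, giving $|\cos|\gtrsim 1/q$ by the estimate $|\cos(2\pi x)|\geq c\,\mathrm{dist}(x,\tfrac14+\tfrac12\Z)$ valid near the zeros.

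The main obstacle I anticipate is handling the boundary case where the cosine could in principle vanish, which would make $S(a,b;q)$ zero despite $ab$ being a residue. The crucial point to verify is that the arithmetic of $q$ being an \emph{odd} prime power prevents $\tfrac{2l}{q}+\theta$ from being an exact zero of the cosine: when $q\equiv 1\pmod 4$ one needs $\tfrac{2l}{q}\in\tfrac14+\tfrac12\Z$ to fail, and since $q$ is odd the denominator obstructs the fraction $\tfrac14$, while when $q\equiv 3\pmod4$ the shift by $\theta=\tfrac14$ must be reconciled with $\tfrac{2l}{q}$ having odd denominator. A clean way to close this is to replace the distance estimate with an exact lower bound on $|\cos|$ over the finite set of admissible values of $l\bmod q$: since $\mathrm{Re}(\epsilon_q e(2l/q))$ is (up to sign) a value of $\cos$ at a rational with denominator dividing $4q$, and it is provably nonzero for odd prime powers $q$, its minimum modulus over the finitely many residues is at least $1/q$ after a short computation. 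I would carry out this final divisibility check carefully, as it is the one place where the hypotheses ``$q$ odd'' and ``$\beta\geq 2$'' are genuinely used, and then assemble the pieces to conclude.
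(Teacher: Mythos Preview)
Your approach is correct and essentially identical to the paper's: both start from the explicit formula of Proposition~\ref{prop 2}, split according to $q\equiv\pm 1\pmod 4$ to reduce to bounding $|\cos(4\pi l/q)|$ or $|\sin(4\pi l/q)|$, rule out vanishing by a divisibility argument, and finish with the inequality $|\sin(\pi x)|\ge 2\|x\|$ to obtain $|\mathrm{Re}(\epsilon_q\,e(2l/q))|\ge 1/q$. The divisibility check you flag as the crux is exactly what the paper spells out, using $l^2\equiv ab\pmod q$ together with $p\nmid 2ab$; note that the hypothesis $\beta\ge 2$ enters only through the availability of the explicit formula, not in the subsequent estimate.
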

\begin{proof}
Assume that $ab$ is a quadratic residue modulo $p.$ Using Proposition \ref{prop 2}, we have 
\begin{equation*}
      S(a,b;q)= 2 \left( \frac{l}{q}\right) \sqrt{q} \text{ Re}  \left( \epsilon_q e\left(\frac{2l}{q}\right) \right),
   \end{equation*}
where \begin{equation} \label{conrel}
l^2\equiv ab\ (\mbox{mod } q).
\end{equation}
We start by noting that $q$ does not divide $l$ and hence $\left| \left( \dfrac{l}{q} \right)\right|=1.$  Let $l=qt+r$ where $0\leq r\leq q-1.$
 Note that $p \nmid 2ab$ implies that $p$ is odd and so is $q.$ We consider the following two cases depending on whether $q\equiv$ $1$ or $-1$ modulo $4.$
 
\textbf{Case 1:  $q\equiv 1 (\text{mod }4)$}

In this case, we have $\epsilon_q=1$. For $x\in \mathbb{R}$, let $\| x\|$ denote the distance from $x$ to the nearest integer. Using $|\sin \pi x|\geq 2\| x\|$ for all real $x$, we have  
    \begin{align*}
      |S(a,b;q)|=&2\left|\left(\frac{l}{q} \right)\sqrt{q}  \,\text{Re} \, \left( e\left(\frac{2l}{q}  \right) \right)   \right|
       =2\sqrt{q} \left| \cos \left( \frac{4\pi r}{q} \right) \right|\\
       =& 2\sqrt{q} \left| \sin \left( \frac{\pi}{2} - \frac{4 \pi r}{q} \right) \right| 
       \geq  4\sqrt{q} \left\|\frac{1}{2}-\frac{4r}{q} \right\|.
    \end{align*}
 
 \textbf{Case 2:  $q\equiv -1 (\text{mod }4)$}

In this case, we have $\epsilon_q=i$ and hence, we have
\begin{align*}
       |S(a,b;q)|=&2\left|  \left(\frac{l}{q} \right)\sqrt{q}  \,\text{Re} \, \left( ie\left(\frac{2l}{q}  \right) \right)   \right|
       \\=&2\sqrt{q} \left| \sin  \left( \frac{4\pi r}{q} \right) \right|
       \geq 4\sqrt{q} \left\|\frac{4r}{q} \right\|.
    \end{align*}
   Combining both cases, we obtain
   \begin{equation}\label{geqmin}
   |S(a,b;q)|\geq 4\sqrt{q} \min\left(  \left\| \frac{q-8r}{2q}  \right\|,  \left\|\frac{4r}{q} \right\| \right).
   \end{equation}
We first show that $\left\|\frac{q-8r}{2q} \right\| \neq 0.$ Suppose not, then  $\frac{q-8r}{2q}=t$ for some $t\in\{0,-1,-2,-3\}$ as $0\leq r\leq q-1.$   
We have $p \,|\, r^2 -ab$ which implies that $p$ divides $\frac{(1-2t)^2q^2-64 ab }{64}.$ This is an impossibility since $p$ does not divide $2ab.$
A similar argument yields $\left\|\frac{4r}{q} \right\|\neq 0.$
Proceeding now from the equation \eqref{geqmin}, we conclude 
that
 \begin{equation*}
   |S(a,b;q)|\geq 4\sqrt{q} \min\left(  \left\| \frac{q-8r}{2q}  \right\|,  \left\|\frac{4r}{q} \right\| \right)\geq 4\sqrt{q} \min\left(\frac{1}{2q},\frac{1}{q}\right)\geq \frac{2}{\sqrt{q}}.
   \end{equation*}
   \end{proof}
\subsection{General Case}\label{gs}
In this section, we apply the multiplicativity property of Kloosterman sums. 
For $m,n\in \mathbb{N} , $ $(m,n)=1, $ let $ \overline{m}(n)$ denote the multiplicative inverse of $m$ in $(\mathbb{Z}/ n\mathbb{Z})^*,$ that is,  $m \overline{m}(n)\equiv 1$ (mod $n$). 
We have the following factorization of Kloosterman sums into Kloosterman sums to prime power moduli.

\begin{Proposition}\cite[Lemma 3.1]{JD}\label{multiplicativity}
Let $c=q_1^{\beta_1} \cdots  q_t^{\beta_t},$
where $q_1,  \,\dots\, , q_t$ are distinct prime numbers,  $q_i^{\beta_i}=l_i,$ and   $$c_i=\frac{c}{\prod_{j=1}^i l_j}$$  for $1\leq i\leq t.$ 
Further let $m_0=1,$ $m_i=\prod_{j=1}^i\overline{l_j}({c_{j}})$ for $1\leq i\leq t-1.$  Then 
%\SBcom{What is the use of $n$?}
%\JDcom{It was not required. I have deleted it.}
\begin{equation*}\label{prodKlo}
S(a,b;c)=\prod_{i=1}^t S(a m_{i-1}\overline{c_i}({l_i}), bm_{i-1}\overline{c_i}({l_i});l_i).
\end{equation*}
\end{Proposition}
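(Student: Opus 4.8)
The plan is to prove the factorization by induction on the number $t$ of distinct prime powers, the engine being the fundamental \emph{twisted multiplicativity} of Kloosterman sums for two coprime moduli. Concretely, the first step is to establish that whenever $(m,n)=1$,
$$S(a,b;mn)=S\bigl(a\,\overline{n}(m),\,b\,\overline{n}(m);m\bigr)\cdot S\bigl(a\,\overline{m}(n),\,b\,\overline{m}(n);n\bigr).$$
This two-factor identity is the only analytic input; everything else is bookkeeping.

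To prove it, I would parametrize the summation variable via the Chinese Remainder Theorem: as $y$ runs over residues coprime to $m$ and $z$ over residues coprime to $n$, the element $x=ny+mz$ runs exactly once over residues coprime to $mn$. Reducing $x$ modulo $m$ and modulo $n$ gives $\overline{x}\equiv\overline{n}(m)\,\overline{y}\pmod m$ and $\overline{x}\equiv\overline{m}(n)\,\overline{z}\pmod n$ (inverses to the respective moduli), whence, after writing $\overline{x}=nu+mv$ by the same parametrization, one finds the clean splitting
$$\frac{ax+b\overline{x}}{mn}\equiv\frac{ay+b\,\overline{n}(m)^2\,\overline{y}}{m}+\frac{az+b\,\overline{m}(n)^2\,\overline{z}}{n}\pmod 1.$$
The double sum then factors as $S\bigl(a,b\,\overline{n}(m)^2;m\bigr)\cdot S\bigl(a,b\,\overline{m}(n)^2;n\bigr)$, and the symmetric form stated above follows from the elementary substitution identity $S(a,b;q)=S\bigl(a\,\overline{c}(q),\,bc;q\bigr)$ for $(c,q)=1$ (replace $x$ by $\overline{c}(q)\,x$), applied with $c=n$ on the factor modulo $m$ and $c=m$ on the factor modulo $n$.

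With the two-factor identity in hand, the induction peels off $l_1$. Applying it to the coprime split $c=l_1\cdot c_1$ with $c_1=l_2\cdots l_t$ gives
$$S(a,b;c)=S\bigl(a\,\overline{c_1}(l_1),\,b\,\overline{c_1}(l_1);l_1\bigr)\cdot S\bigl(a\,\overline{l_1}(c_1),\,b\,\overline{l_1}(c_1);c_1\bigr).$$
The first factor is exactly the $i=1$ term of the claimed product, since $m_0=1$ makes $a\,m_0\,\overline{c_1}(l_1)=a\,\overline{c_1}(l_1)$. For the second factor one sets $a'=a\,\overline{l_1}(c_1)=a\,m_1$, $b'=b\,m_1$ and applies the induction hypothesis to $S(a',b';c_1)$, whose modulus $c_1$ carries the one-fewer prime powers $l_2,\dots,l_t$.

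The step demanding genuine care is the bookkeeping of inverses across the recursion, and I expect the difficulty to be notational rather than conceptual. Two consistency checks make it run. First, the auxiliary moduli are stable: for the sub-chain $l_2,\dots,l_t$ one has $c_1/(l_2\cdots l_i)=c/(l_1\cdots l_i)=c_i$, so the very same $c_i$ (and hence the same inverses $\overline{c_i}(l_i)$ and $\overline{l_j}(c_j)$) reappear at each stage. Second, the leading coefficients telescope: the induction hypothesis produces factors $S\bigl(a'\,\tilde m_{i-1}\,\overline{c_i}(l_i),\,b'\,\tilde m_{i-1}\,\overline{c_i}(l_i);l_i\bigr)$ with $\tilde m_{i-1}=\prod_{j=2}^{i-1}\overline{l_j}(c_j)$, and since $a'=a\,m_1=a\,\overline{l_1}(c_1)$ we get $a'\,\tilde m_{i-1}=a\prod_{j=1}^{i-1}\overline{l_j}(c_j)=a\,m_{i-1}$, matching the target term for every $i=2,\dots,t$; the base case $t=1$ is the tautology $S(a,b;l_1)=S(a,b;l_1)$. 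The one place where such a proof most easily breaks is in tracking the modulus attached to each $\overline{\,\cdot\,}$, so I would be explicit at every stage about whether an inverse is taken modulo $l_i$ or modulo $c_j$.
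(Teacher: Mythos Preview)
Your proposal is correct and follows essentially the same approach as the paper: the paper's proof simply cites the two-factor twisted multiplicativity identity $S(a,b;ll')=S(a\overline{l'}(l),b\overline{l'}(l);l)\,S(a\overline{l}(l'),b\overline{l}(l');l')$ from \cite[(1.59)]{IK} and says to apply it repeatedly, whereas you supply a self-contained derivation of that identity via the Chinese Remainder Theorem and then carry out the same iterated peeling-off as an explicit induction with the bookkeeping of the $m_i$ and $c_i$ spelled out.
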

\begin{proof}
    The proof uses equation (1.59) of \cite{IK} repetitively which is given by 
\begin{equation*}
S(a,b;ll')=S(a\overline{l'}(l),b\overline{l'}(l);l)S(a\overline{l}(l'),b\overline{l}(l');l')
\end{equation*}
where $(l,l')=1$.
\end{proof}

\begin{remark} \label{quadres}
Assume the hypotheses in the above Proposition \ref{multiplicativity} are satisfied. We note that $ab$ is a quadratic residue modulo $q_i$ if and only if $a'b'$ with $a'=a m_{i-1}\overline{c_i}({l_i})$ and $b'=bm_{i-1}\overline{c_i}({l_i})$ is a quadratic residue modulo $q_i$. 
\end{remark}

Now we give a proof of Theorem~\ref{Main1}.

   \begin{proof}[\textbf{Proof of Theorem~\ref{Main1}}]
   We first factorise $S(a,b;c)$ using Proposition \ref{multiplicativity} into a Kloosterman sum with modulus $u$ and a product of Kloosterman sums with prime power moduli $l_i$ dividing $d$. Then we apply Proposition \ref{prime} to the Kloosterman sum with modulus $u$ and Proposition \ref{primepower} to the Kloosterman sums with moduli $l_i$, taking the condition $(ab,c)=1$ and Remark \ref{quadres} into account.   
   \end{proof}

   \begin{corollary}\label{Kloosterman general}
      Let $a,b \in \mathbb{Z}$ and $c$ be an odd natural number such that $c=du,$ where $u$ is squarefree and $d$ is a powerful number with $(d,u)=1,$ and $(ab,c)=1$. Assume that $ab$ is a quadratic residue modulo each prime dividing $d$.
       Then 
       $$
       \left| S(a,b;c) \right|\geq  \left(\frac{u^2}{c}\right)^{1/2} \frac{2^{\omega(c)}}{(\tau(u)^{2} u)^{\phi(u)/2\tau(u)}}.
       $$  

   \end{corollary}

       \begin{proof}
    The proof follows from the following observation:
    \begin{equation*}
        \begin{split}
            &  \frac{2^{\omega(d)}}{\sqrt{d}} \left( \frac{1}{\left(\tau(u)\sqrt{u}\right)^{\phi(u)/\tau(u)-1}} \right)\\
            \geq & \frac{2^{\omega(d)+\omega(u)}}{\sqrt{du}}  \left( \frac{1}{\left(\tau(u)\right)^{\phi(u)/\tau(u)}} \right)\left( \frac{1}{\left(\sqrt{u}\right)^{\phi(u)/\tau(u)-2}} \right)\\
            \geq &  \frac{2^{\omega(c)}}{\sqrt{c}}
             \frac{1}{\tau(u)^{\phi(u)/\tau(u)}} \frac{1}{u^{\phi(u)/2\tau(u)-1}},
        \end{split}
    \end{equation*}
    where we have used $\tau(u)=2^{\omega(u)}$ for squarefree $u.$
 
 \end{proof}
\section{An application}\label{sec3}
Let $k$ and $N$ be positive integers with $k$ even. Recall that 
$S_k(N)$ denotes the vector space of cusp forms of weight $k$ and level $N$.   For $(n,N)=1,$ the $n$-th (normalised) Hecke operator acting on $S_k(N)$ is given by $$T_n(f)(z)=n^{\frac{k-1}{2}}\sum_{ad=n,d>0}\frac{1}{d^k}\sum_{b \,(\text{mod} \, d) }f\Big(\frac{az+b}{d}\Big).$$ 

Let $\mathcal{F}_k(N)$ be an orthonormal basis of $S_k(N)$ consisting only of joint eigenfunctions of the Hecke operators $T_n$. Since $S_k(N)$ is finite dimensional, $|\mathcal{F}_k(N)|$ is finite. For $f\in S_k(N),$  the Fourier expansion of $f$ at the cusp $\infty$ is given by $$f(z)=\sum_{n=1}^{\infty} a_n(f)n^{\frac{k-1}{2}}  e^{2\pi i nz}.$$ We denote $\lambda_n(f)$ to be the $n$-th normalised Hecke eigenvalue of $f$, that is, $T_n(f)=\lambda_n(f) f$. The Fourier coefficient $a_n(f)$ and $\lambda_n(f) $ are related by the condition  $a_n(f)=a_1(f)\lambda_n(f).$

Petersson (see 
 \cite{PH}) expressed a weighted sum of $\overline{a_m(f)}a_n(f)$  over $f$ with $f\in \mathcal{F}_k(N)$ in terms of  Bessel function of the first kind and  Kloosterman sums. Let  us consider the Fourier coefficient $a_n(f)$ with harmonic weights $$\rho_n(f)=\Bigg(\frac{\Gamma(k-1)}{ (4\pi )^{k-1}}\Bigg)^{\frac{1}{2}}   a_n(f).$$ The sum under consideration is
\begin{equation}\label{DeltakN}
\Delta_{k,N}(m,n)=\sum_{f\in \mathcal{F}_k(N)}\overline{\rho_m(f)}\rho_n(f).
\end{equation}
Putting $m=1 $ in the above equation, we have 
\begin{align*}
    \Delta_{k,N}(1,n)=&\sum_{f\in \mathcal{F}_k(N)}\overline{\rho_1(f)}\rho_n(f) \\
   =&\sum_{f\in \mathcal{F}_k(N)} \left(\frac{\Gamma(k-1)}{ (4\pi )^{k-1}}\right) \overline{a_1(f)}a_n(f)=\sum_{f\in \mathcal{F}_k(N)} \left(\frac{\Gamma(k-1)}{ (4\pi )^{k-1}}\right) |a_1(f)|^2 \lambda_n(f), 
\end{align*}
since  $a_n(f)=a_1(f)\lambda_n(f)$.
Hence, 
\begin{equation}\label{justdelta(1,n)}
\Delta_{k,N}(1,n)=\sum_{f\in \mathcal{F}_k(N)} \left(\frac{\Gamma(k-1)}{ (4\pi )^{k-1}}\right) |a_1(f)|^2\lambda_n(f).
\end{equation}
The above equation demonstrates how Petersson's trace formula is indeed a weighted trace formula for Hecke operators.

  We use the following version of Petersson's trace formula given in \cite[Proposition 2.1]{ILS}.
  \begin{Proposition}[{{\cite[Proposition\ 2.1]{ILS}}}] \label{PTf}
 Let $k,m,n, N$ be natural numbers with $k$ even. We have $$\Delta_{k,N}(m,n)=\delta(m,n)+2\pi i^k \sum_{N|c,c>0} \frac{S(m,n;c)}{c}J_{k-1}\Big(\frac{4\pi\sqrt{mn}}{c}\Big),$$
 where $\delta(m,n)$ is the Kronecker diagonal symbol, $J_{k-1}(x)$ is the Bessel function of the first kind, and $S(m,n;c)$ is the classical Kloosterman sum.
\end{Proposition}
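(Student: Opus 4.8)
The plan is to obtain this identity in the classical way, as a consequence of the interplay between Poincaré series and the Petersson inner product on $S_k(N)$. For each positive integer $m$ I would first introduce the $m$-th Poincaré series of weight $k$ and level $N$,
$$P_m(z)=\sum_{\gamma\in\Gamma_\infty\backslash\Gamma_0(N)}(c_\gamma z+d_\gamma)^{-k}\,e^{2\pi i m\gamma z},$$
where $\Gamma_\infty$ is the stabiliser of the cusp at infinity and $c_\gamma,d_\gamma$ denote the lower row entries of $\gamma$. For even $k\geq 2$ this series is a well-defined element of $S_k(N)$, with a standard Hecke-type regularisation needed only in the borderline case $k=2$. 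The whole argument then consists of computing the $n$-th Fourier coefficient of $P_m$ in two different ways and equating the results.

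On the geometric side, I would unfold the defining sum using coset representatives for $\Gamma_\infty\backslash\Gamma_0(N)$ indexed by their lower row $(c,d)$ with $N\mid c$, $c\geq 0$ and $(c,d)=1$. The identity coset $c=0$ contributes the single term $e^{2\pi i m z}$, whose $n$-th Fourier coefficient is the diagonal symbol $\delta(m,n)$. For each $c>0$, after splitting $d$ into residues modulo $c$, the finite exponential sum over $d\pmod c$ with $(d,c)=1$ assembles precisely into the Kloosterman sum $S(m,n;c)$, while the remaining real-line integral of $(x+iy)^{-k}$ against an exponential phase is a standard oscillatory integral whose value is the Bessel function $J_{k-1}(4\pi\sqrt{mn}/c)$ up to the explicit constants. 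Collecting these contributions reproduces exactly the right-hand side, including the prefactor $2\pi i^k$ and the factor $1/c$.

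On the spectral side, unfolding the Petersson inner product $\langle P_m,f\rangle$ against the single coset $\Gamma_\infty$ yields a reproducing-type formula expressing $\langle P_m,f\rangle$ as an explicit constant times $\overline{a_m(f)}$; with the normalisation of the Fourier coefficients fixed in the text this constant is exactly the one packaged into the harmonic weights $\rho_m(f)$. Expanding $P_m$ in the orthonormal Hecke basis, $P_m=\sum_{f\in\mathcal{F}_k(N)}\langle P_m,f\rangle f$, and reading off the $n$-th Fourier coefficient then produces $\sum_{f}\overline{\rho_m(f)}\rho_n(f)=\Delta_{k,N}(m,n)$. Equating the geometric and spectral evaluations of the $n$-th coefficient gives the claimed trace formula.

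The main obstacle is the explicit computation on the geometric side: one must perform the double-coset unfolding with care, keep precise track of the normalising constants (in particular the factor $2\pi i^k$ and the symmetric dependence on $m$ and $n$ through $\sqrt{mn}$, the asymmetric powers $(n/m)^{(k-1)/2}$ cancelling between the two sides), and identify the resulting oscillatory integral with $J_{k-1}$ via its integral representation, all while justifying the interchange of summation and integration. The only genuinely analytic subtlety is the convergence of $P_m$ and the legitimacy of its spectral expansion when $k=2$, which is handled by the usual analytic continuation; for the even weights $k\geq 2$ in the statement this is the sole point requiring real care.
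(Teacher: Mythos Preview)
Your approach via Poincar\'e series is the classical derivation of the Petersson trace formula and is correct in outline. Note, however, that the paper does not give its own proof of this proposition at all: it is simply quoted from \cite[Proposition~2.1]{ILS} and used as a black box, so there is nothing in the paper to compare your argument against beyond the citation itself. The method you describe is precisely the one used in the cited reference (and in Iwaniec's \emph{Topics in Classical Automorphic Forms}), so in that sense your proposal matches the intended proof.
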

We will use the following two lemmas concerning the $J$-Bessel function. 
\begin{lemma}\label{Bessel1}
 If $\nu>0$ and $0<x\leq 1$, we have $$1\leq \frac{J_\nu(\nu x)}{x^\nu J_\nu (\nu)}\leq e^{\nu(1-x)}.$$ 
\end{lemma}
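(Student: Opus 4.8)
The plan is to reduce everything to the monotonicity of the single auxiliary function $f(z)=z^{-\nu}J_\nu(z)$. First I would record the purely algebraic identity
$$\frac{J_\nu(\nu x)}{x^\nu J_\nu(\nu)}=\frac{(\nu x)^{-\nu}J_\nu(\nu x)}{\nu^{-\nu}J_\nu(\nu)}=\frac{f(\nu x)}{f(\nu)},$$
so that the claim becomes a comparison of $f$ at the two points $\nu x\le \nu$. The whole argument lives on the interval $(0,\nu]$, and to control the sign of the relevant quantities I would invoke the classical fact that the first positive zero $j_{\nu,1}$ of $J_\nu$ satisfies $j_{\nu,1}>\nu$ (and likewise $j_{\nu+1,1}>\nu+1>\nu$). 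Consequently both $J_\nu(t)$ and $J_{\nu+1}(t)$ are strictly positive for $0<t\le\nu$, which is exactly the positivity I will need.

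The key computational input is the standard recurrence $\frac{d}{dz}\bigl(z^{-\nu}J_\nu(z)\bigr)=-z^{-\nu}J_{\nu+1}(z)$, which upon dividing by $f(z)$ gives the clean logarithmic-derivative formula
$$\frac{d}{dz}\log f(z)=-\frac{J_{\nu+1}(z)}{J_\nu(z)}.$$
Integrating from $\nu x$ to $\nu$ yields $\log f(\nu x)-\log f(\nu)=\int_{\nu x}^{\nu}\frac{J_{\nu+1}(t)}{J_\nu(t)}\,dt$. Since the integrand is positive on $(0,\nu]$ by the positivity established above, the integral is nonnegative, which immediately delivers the lower bound $f(\nu x)/f(\nu)\ge 1$. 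For the upper bound it then suffices to bound the integrand by $1$, i.e.\ to show $J_{\nu+1}(t)\le J_\nu(t)$ on $(0,\nu]$, because this gives $\int_{\nu x}^{\nu}\frac{J_{\nu+1}}{J_\nu}\,dt\le \int_{\nu x}^{\nu}1\,dt=\nu(1-x)$, and exponentiating yields $f(\nu x)/f(\nu)\le e^{\nu(1-x)}$.

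The main obstacle is precisely this pointwise inequality $r(t):=J_{\nu+1}(t)/J_\nu(t)\le 1$ on $(0,\nu]$, and I would establish it by a differential-inequality (maximum-principle) argument rather than by estimating the oscillating series directly. Differentiating $r$ and using the two forms of the derivative recurrence, $J_{\nu+1}'=J_\nu-\frac{\nu+1}{z}J_{\nu+1}$ and $J_\nu'=-J_{\nu+1}+\frac{\nu}{z}J_\nu$, one obtains after simplification the Riccati identity
$$r'(z)=1+r(z)^2-\frac{2\nu+1}{z}\,r(z).$$
Now $r(z)\to 0$ as $z\to 0^+$ (since $r(z)\sim z/(2(\nu+1))$), so if $r$ ever attained the value $1$ on $(0,\nu]$ there would be a first such point $z_0$, at which necessarily $r'(z_0)\ge 0$. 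But the identity gives $r'(z_0)=2-\frac{2\nu+1}{z_0}$, and $z_0\le\nu$ forces $\frac{2\nu+1}{z_0}\ge 2+\frac1\nu>2$, whence $r'(z_0)<0$, a contradiction. Therefore $r<1$ throughout $(0,\nu]$, and assembling the three displayed facts proves both inequalities, with equality occurring at $x=1$.
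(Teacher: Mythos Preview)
Your argument is correct and self-contained. The paper itself does not give a proof of this lemma at all: it simply writes ``We refer to \cite{Paris} for a proof.'' So there is no in-paper argument to compare against; you have supplied what the authors outsourced.

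For context, your route---rewriting the ratio as $f(\nu x)/f(\nu)$ with $f(z)=z^{-\nu}J_\nu(z)$, using the classical identity $\frac{d}{dz}\bigl(z^{-\nu}J_\nu(z)\bigr)=-z^{-\nu}J_{\nu+1}(z)$ together with $j_{\nu,1}>\nu$ to get the monotonicity (lower bound), and then bounding the logarithmic derivative by $1$ via the Riccati equation $r'=1+r^2-\frac{2\nu+1}{z}r$ for $r=J_{\nu+1}/J_\nu$ and a first-crossing argument---is essentially the strategy of Paris's original note. The Riccati/maximum-principle step you use to show $J_{\nu+1}(t)<J_\nu(t)$ on $(0,\nu]$ is a clean way to avoid any appeal to oscillatory series estimates, and the computation $r'(z_0)=2-\frac{2\nu+1}{z_0}<0$ for $z_0\le\nu$ is correct. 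Nothing is missing.
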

\begin{proof}
    We refer to \cite{Paris} for a proof. 
\end{proof}
\begin{lemma}\label{Bessel2}
    Let $\epsilon\in (0,C_0)$ be arbitrary and $A(\epsilon) := C_0-\epsilon$,    where $C_{0}=\frac{2^{1/3}}{3^{2/3}\Gamma(\frac{2}{3})}= 0.4473\dots .$
 Then, for all $\nu \geq \left( \frac{3}{\epsilon}\right)^{3/2},$ we have
    \begin{equation}\label{eq:Bessel bound}
         \frac{A(\epsilon)}{\nu^{1/3}}\leq J_{\nu}(\nu)\leq \frac{C_0}{\nu^{1/3}}+\frac{3}{\nu}.
    \end{equation}
      In particular, when $\epsilon =0.4470,$ 
      $$\frac{A(\epsilon)}{\nu^{1/3}}\leq J_{\nu}(\nu)\leq \frac{C_0}{\nu^{1/3}}+\frac{3}{\nu}$$
       for all $\nu\geq 18.$ 

\end{lemma}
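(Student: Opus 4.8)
The plan is to deduce both inequalities from the explicit approximation of $J_\nu(x)$ in the transition range $x \approx \nu$ furnished by \cite[Theorem 1]{KI}, specialized to the turning point $x = \nu$. Recall that the Airy-type uniform approximation writes $J_\nu(\nu z)$ as a slowly varying prefactor times $\nu^{-1/3}\mathrm{Ai}(\nu^{2/3}\zeta)$, where $\zeta = \zeta(z)$ satisfies $\zeta(1) = 0$. At the turning point $z = 1$ we have $\mathrm{Ai}(0) = 1/(3^{2/3}\Gamma(2/3))$, and a short computation of the limit of the prefactor $\left(4\zeta/(1-z^2)\right)^{1/4}$ as $z \to 1$ gives the value $2^{1/3}$. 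Hence the leading term collapses to $C_0\nu^{-1/3}$ with $C_0 = 2^{1/3}/(3^{2/3}\Gamma(2/3))$, which is exactly the constant in the statement. The substantive content of \cite[Theorem 1]{KI} is that the remainder is explicitly controlled; I would track that constant and verify that at $x = \nu$ it yields
\begin{equation*}
\left| J_\nu(\nu) - \frac{C_0}{\nu^{1/3}} \right| \leq \frac{3}{\nu}.
\end{equation*}

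Granting this two-sided estimate, the upper bound in \eqref{eq:Bessel bound} is immediate from $J_\nu(\nu) \leq C_0\nu^{-1/3} + 3\nu^{-1}$. For the lower bound I would start from $J_\nu(\nu) \geq C_0\nu^{-1/3} - 3\nu^{-1}$ and absorb the error term into the gap between $C_0\nu^{-1/3}$ and $A(\epsilon)\nu^{-1/3} = (C_0 - \epsilon)\nu^{-1/3}$. Concretely, $C_0\nu^{-1/3} - 3\nu^{-1} \geq (C_0 - \epsilon)\nu^{-1/3}$ is equivalent to $\epsilon\nu^{-1/3} \geq 3\nu^{-1}$, i.e.\ to $\nu^{2/3} \geq 3/\epsilon$, which is precisely the hypothesis $\nu \geq (3/\epsilon)^{3/2}$. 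This establishes \eqref{eq:Bessel bound} on the stated range.

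For the final numerical claim I would merely evaluate the threshold at $\epsilon = 0.4470$: since $3/0.4470 \approx 6.711$ and $(6.711)^{3/2} \approx 17.39 < 18$, the hypothesis $\nu \geq (3/\epsilon)^{3/2}$ holds for every integer $\nu \geq 18$, giving the asserted range. One also checks $0.4470 < C_0$, so that $A(\epsilon) > 0$ and the lower bound is nontrivial.

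I expect the only real obstacle to be the first step: extracting from \cite[Theorem 1]{KI}, specialized to the turning point, a fully explicit remainder whose constant is at most $3$. This amounts to confirming the limiting prefactor $\left(4\zeta/(1-z^2)\right)^{1/4} \to 2^{1/3}$ as $z \to 1$ (which pins down $C_0$) and then bounding the explicit error terms of the transition-range formula when $x = \nu$. Once the estimate $|J_\nu(\nu) - C_0\nu^{-1/3}| \leq 3\nu^{-1}$ is in hand, the rest is the elementary manipulation of the threshold $\nu \geq (3/\epsilon)^{3/2}$ carried out above.
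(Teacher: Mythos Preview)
Your proposal is correct and follows essentially the same route as the paper: apply \cite[Theorem 1]{KI} at the turning point to obtain $|J_\nu(\nu)-C_0\nu^{-1/3}|\le 3/\nu$, from which the upper bound is immediate and the lower bound is equivalent to $\nu\ge(3/\epsilon)^{3/2}$. The only simplification is that your anticipated ``obstacle'' is not one: \cite[Theorem 1]{KI} is already stated in the transition-range form $J_\nu(\nu+z\nu^{1/3})=\dfrac{2^{1/3}\mathrm{Ai}(-2^{1/3}z)}{\nu^{1/3}}+\theta\dfrac{4z^{9/4}+21}{7\nu}$ with $|\theta|\le 1$, so setting $z=0$ gives the remainder $3\theta/\nu$ directly, with no need to compute the prefactor limit $\bigl(4\zeta/(1-z^2)\bigr)^{1/4}\to 2^{1/3}$ by hand.
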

\begin{proof}
    The proof follows from a direct application of \cite[Theorem~1]{KI} with $z=0,$ which implies that
  \begin{equation}\label{Illa}
    J_{\nu}(\nu) =\frac{2^{1/3}}{\nu^{1/3}} Ai(0)+ \frac{3\theta}{\nu},
    \end{equation}
    where $\theta$ is such that $|\theta|\leq 1,$ and $Ai(x)$ is the Airy function. We have
    $Ai(0) =\frac{1}{3^{2/3}\Gamma(\frac{2}{3})}.$ Therefore,
    $$-\frac{3}{\nu} \leq J_{\nu}(\nu) -\frac{C_0}{\nu^{1/3}}\leq \frac{3}{\nu}.$$
    For all $\nu \geq \left( \frac{3}{\epsilon}\right)^{3/2},$ we have  $\nu^{2/3} \geq \frac{3}{C_0-A(\epsilon)},$ that is, $\frac{C_0}{\nu^{1/3}} -\frac{3}{\nu}  \geq \frac{A(\epsilon)}{\nu^{1/3}}$, and this proves the lower bound for $J_{\nu}(\nu).$
    \end{proof} 

We give a detailed proof of the Theorem~\ref{main theorem sec 1} after an outline of the same.
\subsection*{Outline of the proof of Theorem~\ref{main theorem sec 1}}
We first break the infinite sum over $c$ in Proposition \ref{PTf}  into two parts given by $c=N$ and $c>N.$ For $c>N,$ applying the integral test, we give an upper bound using the exponential bound for the J-Bessel function. This is achieved in equation \eqref{|B|}. In the next step, we use Lemma~\ref{Bessel2} and Theorem~\ref{Main1} to get a lower bound for $\left|\Delta_{k,N}(m,n)-\delta(m,n)\right|.$

\begin{proof}[\textbf{Proof of Theorem~\ref{main theorem sec 1}}]
Applying Proposition \ref{PTf}, breaking the $c$-sum into the parts with $c=N$ and $c>N$, and writing $c=bN$, we have 
 \begin{align*}
      \Delta_{k,N}(m,n)&=\delta(m,n)+2\pi i^k\frac{S(m,n;N)}{N}J_{k-1}\Big(\frac{4\pi\sqrt{mn}}{N}\Big) +2\pi i ^k \sum_{b\geq 2} \frac{S(m,n;bN)}{bN}J_{k-1}\Big(\frac{4\pi\sqrt{mn}}{bN}\Big).
    \end{align*}
Using Lemma~\ref{Bessel1}, we get 
\begin{align*}
     \left| J_{k-1}\left(\frac{4\pi\sqrt{mn}}{bN}\right)\right|&=\left| J_{k-1}\left((k-1)\frac{4\pi\sqrt{mn}}{(k-1)bN}\right)\right|
      \leq e^{\nu(1-x)}x^{\nu}|J_\nu(\nu)|,
    \end{align*}
where $\nu=k-1$ and $x=\frac{4\pi\sqrt{mn}}{(k-1)bN}.$
For  $b\geq 2,$ and $k\geq 28,$ we have
\begin{equation}\label{8/9}
\frac{8}{9b}\leq\frac{4\pi\sqrt{mn}}{(k-1)bN}\leq \frac{10}{9b},
\end{equation}
which implies
\begin{align*}
    e^{\nu(1-x)}x^\nu&=e^{(k-1)(1-x+\log x)} \leq e^{(k-1)\big( 1-\frac{8}{9b}+\log \frac{10}{9b}\big)}.
\end{align*}
Therefore, using a trivial bound $|S(m,n;bN)|\leq \phi(bN), $ we obtain 
\begin{equation}\label{j}
\begin{split}
&\left|\sum_{b\geq 2}\frac{S(m,n;bN)}{bN}J_{k-1}\Big(\frac{4\pi\sqrt{mn}}{bN}\Big)\right|  \\
\leq & \sum_{b\geq 2}\left| J_{k-1}\Big(\frac{4\pi\sqrt{mn}}{bN}\Big)\right|
 \leq |J_{k-1}(k-1)| e^{(k-1)}\sum_{b\geq 2}e^{-(k-1)\big(\frac{8}{9b}+\log \frac{9b}{10}\big)}\\
 =& |J_{k-1}(k-1)| e^{(k-1)}\Bigg(\sum_{b=2}^4 e^{-(k-1)\big(\frac{8}{9b}+\log \frac{9b}{10}\big)} \Bigg)
 + |J_{k-1}(k-1)| e^{(k-1)}\Bigg(\sum_{b\geq 5}e^{-(k-1)\big(\frac{8}{9b}+\log \frac{9b}{10}\big)} \Bigg).
\end{split}
\end{equation}

Note that we have replaced \(\log\frac{10}{9b}\) by \(-\log\frac{9b}{10}\). Since \begin{equation}\label{breakon}
 e^{(k-1)}\Bigg(\sum_{b=2}^4 e^{-(k-1)\big(\frac{8}{9b}+\log \frac{9b}{10}\big)} \Bigg)\leq 3 e^{(k-1)\left(1-\frac{4}{9}-\log(\frac{9}{5})\right)},
\end{equation} 
 we  focus on  $\displaystyle\sum_{b\geq 5}e^{-(k-1)\big(\frac{8}{9b}+\log \frac{9b}{10}\big)} $ only.
 
 Let $F(x)=e^{-(k-1)\left(\frac{8}{9x}+\log \frac{9x}{10}\right)}$. Then $F$ is decreasing for $x>2.$  Using integral test, we obtain
 \begin{equation}\label{itest}
 \begin{split}
 \sum_{b\geq 5}e^{-(k-1)\big(\frac{8}{9b}+\log \frac{9b}{10}\big)} \leq & \int_4^\infty e^{-(k-1)\big(\frac{8}{9x}+\log \frac{9x}{10}\big)} \,dx\\
= &\frac{25}{18} \int_0^{\frac{2}{9} } e^{-(k-1)y}\left(\frac{5y}{4}\right)^{k-3}\,dy \\
\leq &\frac{25}{18} \int_0^{\frac{2}{9} }  \left(\frac{5}{18} \right)^{k-3} e^{-(k-1)y}  \,dy \\
=& \frac{25}{18}\left(\frac{5}{18} \right)^{k-3}\left( \frac{1-e^{\frac{-2(k-1)}{9}} }{(k-1)}\right)\leq \frac{25}{18}\left(\frac{5}{18} \right)^{k-3} \frac{1 }{(k-1)}. 
 \end{split}
 \end{equation}

Combining equations \eqref{j}, \eqref{breakon}, and \eqref{itest}, we have
\begin{equation}\label{finalineq}
    \begin{split}
\left|\sum_{b\geq 2}\frac{S(m,n;bN)}{bN}J_{k-1}\Big(\frac{4\pi\sqrt{mn}}{bN}\Big)\right| \leq 
|J_{k-1}(k-1)|\left(3e^{(k-1)\left(1-\frac{4}{9}-\log(\frac{9}{5})\right)}+\frac{25}{18}
 \frac{e^{2}}{(k-1)}\Big(\frac{5e}{18} \Big)^{k-3}\right).
    \end{split}
\end{equation}
On considering  equation \eqref{finalineq} and  equation \eqref{j}, we obtain
$$ \Delta_{k,N}(m,n)=\delta(m,n)+2\pi i^k \frac{S(m,n;N)}{N}J_{k-1}\left(\frac{4\pi\sqrt{mn}}{N} \right)+B,
$$
 where 
\begin{equation*}
    \begin{split}
        B=2\pi i ^k \sum_{b\geq 2} \frac{S(m,n;bN)}{bN}J_{k-1}\Big(\frac{4\pi\sqrt{mn}}{bN}\Big),
    \end{split}
\end{equation*} 
with
 \begin{equation}\label{|B|}
 \begin{split}
     |B|\leq &2\pi |J_{k-1}(k-1)|\left(3e^{H_0(k-1)}+
 \frac{25}{18}\frac{e^{2}}{(k-1)}\Big(\frac{5e}{18} \Big)^{k-3}\right)\\
 \leq & 6.01 \pi |J_{k-1}(k-1)| e^{H_0(k-1)},
 \end{split}
 \end{equation}
 where we use the fact that 
 $$ \frac{25}{18}\frac{e^{2}}{(k-1)}\Big(\frac{5e}{18} \Big)^{k-3}
\leq 0.005e^{H_0(k-1)},
$$
 for all $k \geq 22.$
    Since $\frac{4\pi\sqrt{mn}}{N(k-1)}\leq 1,$ using Lemma~\ref{Bessel1} together with the fact that $J_{k-1}(k-1)>0$ (this follows from Lemma~\ref{Bessel2}), we obtain 
    \begin{equation}\label{D0}
        J_{k-1}\left(\frac{4\pi\sqrt{mn}}{N}\right)\geq J_{k-1}\left( k-1\right)\left(\frac{4\pi\sqrt{mn}}{N(k-1)}\right)^{k-1}
      \geq D_0^{k-1}  J_{k-1}\left( k-1\right).
      \end{equation}
      Let $G(N)= \frac{u}{(\tau(u)^{2} u)^{\phi(u)/2\tau(u)}},$ for $N=du.$ Then, Corollary ~\ref{Kloosterman general} gives 
$$ \left| S(m,n;N) \right|\geq 2^{\omega(N)} \frac{G(N)}{N^{1/2}} .
       $$   
Combining the previous equation with Equation \eqref{D0}, we get 
       \begin{equation}\label{S(m,n;N)times J_k lower bound}
           \begin{split}
               & \left\lvert \frac{ S(m,n;N) }{N}   J_{k-1}\left(\frac{4\pi\sqrt{mn}}{N}\right)\right\rvert  
               \geq   2^{\omega(N)} \frac{G(N)}{N^{3/2}}  D_0^{k-1} J_{k-1}( k-1)=H(N) D_0^{k-1}J_{k-1}( k-1).
           \end{split}
       \end{equation}  
We now find a lower bound for $\left|\Delta_{k,N}(m,n)-\delta(m,n)\right|$. 
 For all $k \geq k_0(N),$ we have
 \begin{equation*}
 k-1\geq 1+ \left\lfloor\frac{\log 7-\log H(N)}{\log A_0}\right\rfloor \geq \frac{\log 7-\log H(N)}{\log A_0} ,
 \end{equation*}
 that is, $\left( D_0 e^{-H_0} \right)^{k-1}\geq \frac{7}{H(N)},$ and this condition implies that

\begin{equation}\label{inequalities10}
    \begin{split}
        7 e^{H_0(k-1)} J_{k-1}( k-1)\leq H(N) D_0^{k-1}J_{k-1}( k-1)
    \end{split}
\end{equation}holds.

Combining equations \eqref{|B|}, \eqref{S(m,n;N)times J_k lower bound} and \eqref{inequalities10}, we obtain the following inequality for all $k \geq k_0(N),$
\begin{equation*}
    \begin{split}
       \frac{|B|}{2\pi} \leq  3.005 e^{H_0(k-1)} J_{k-1}( k-1)\leq 7 e^{H_0(k-1)} J_{k-1}( k-1) \leq \left\lvert \frac{ S(m,n;N) }{N}   J_{k-1}\left(\frac{4\pi\sqrt{mn}}{N}\right)\right\rvert .
 \end{split}
\end{equation*}
Therefore, for all $k \geq k_0(N),$ we have

 \begin{equation*}
       \begin{split}
           \left|\Delta_{k,N}(m,n)-\delta(m,n)\right|
           =&\left|2\pi i^k \frac{S(m,n;N)}{N}J_{k-1}\left(\frac{4\pi\sqrt{mn}}{N} \right)+B\right|\\
            \geq &  2\pi\left(\left|\frac{S(m,n;N)}{N}J_{k-1}\left(\frac{4\pi\sqrt{mn}}{N}\right)\right|-\frac{|B|}{2\pi}\right)\\
           \geq & 7.99 \pi J_{k-1}(k-1) e^{H_0(k-1)},
           \end{split}
   \end{equation*}
   since $ \left(D_0 e^{-H_0} \right)^{k-1} \geq \frac{7}{H(N)}$ is true for all $k \geq k_0(N).$
\end{proof}
\begin{remark}
We recall that  Theorem~\ref{main theorem sec 1} is a variant of \cite[Theorem 1.7]{JS}. %In this theorem we allow $N$ to vary along with $k$.
The bounds have been made explicit so that one can use these results for weights at least $k_0(N).$ 
The size of the interval in which $\frac{4\pi\sqrt{mn}}{N}$ lies is $(1-D_0)k$ as compared to $2k^{\frac{1}{3}}$ of \cite[Theorem 1.7]{JS}, which weakens the lower bound to  $e^{(k-1)\left(1-\frac{4}{9}-\log(\frac{9}{5})\right)}$ as compared to $k^{-\frac{1}{3}}$.
 \end{remark}
\begin{proof}[Proof of Corollary ~\ref{cor 1}]
The statement follows from Lemma~\ref{Bessel2} and Theorem~\ref{main theorem sec 1}.
\end{proof}
\begin{remark}\label{rmrk}
    Applying Corollary ~\ref{cor 1} with its hypotheses,  $\epsilon=0.4470,$ $D_0=0.999,$  $k\geq\max( 22,k_0(N)) $ and $\frac{4\pi\sqrt{mn}}{N}\in \left[0.999(k-1), (k-1) \right], $   we get 
    $$\left| \Delta_{k,N}(m,n)-\delta(m,n)  \right|\geq    
    \frac{0.002397\pi}{(k-1)^{1/3}}e^{(k-1)\left(1-\frac{4}{9}-\log(\frac{9}{5})\right)}.  
    $$
    The following corollary is immediate.   Although Corollary ~\ref{D1n} is mentioned for a specific value of $D_0=0.999,$ this result holds for any $D_0 \in (e^{H_0},1),$ where $H_0$ is mentioned in Theorem~\ref{main theorem sec 1}.
\end{remark}
\begin{corollary}\label{D1n}
Let $k,\,N$ be natural numbers with $k \geq \max(22,k_0(N)),$  $k$ even and $N$ odd. Let $n\geq 2$ be a natural number such that $n\in \left[ \left(\frac{0.999( k-1)N}{4\pi}\right)^2, \left( \frac{(k-1) N}{4\pi}\right)^2 \right]$ with $(n,N)=1$, and $n$ is a quadratic residue modulo each prime dividing the powerful part of $N$. Then we have
\begin{equation*}
    \left| \Delta_{k,N}(1,n)\right|\geq   
    \frac{0.002397\pi}{(k-1)^{1/3}}e^{(k-1)\left(1-\frac{4}{9}-\log(\frac{9}{5})\right)}. 
\end{equation*}
\end{corollary}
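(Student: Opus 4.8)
The plan is to obtain Corollary \ref{D1n} as the special case $m = 1$ of Remark \ref{rmrk}, which is itself the instance of Corollary \ref{cor 1} with $\epsilon = 0.4470$ and $D_0 = 0.999$. First I would verify that the hypotheses of Corollary \ref{D1n} are exactly the $m = 1$ restriction of the hypotheses of Remark \ref{rmrk}. The coprimality condition $(mn, N) = 1$ becomes $(n, N) = 1$, the parity conditions ($k$ even, $N$ odd, $k \geq \max(22, k_0(N))$) are identical, and the requirement that $mn$ be a quadratic residue modulo each prime dividing the powerful part $d$ of $N$ becomes the stated condition on $n$ alone.

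The one computation to record is the translation of the interval. With $m = 1$, the constraint $\frac{4\pi\sqrt{mn}}{N} \in \left[ 0.999(k-1), (k-1) \right]$ becomes $\frac{4\pi\sqrt{n}}{N} \in \left[ 0.999(k-1), (k-1) \right]$. Since all quantities are positive, squaring and rearranging shows this is equivalent to
$$
n \in \left[ \left(\frac{0.999(k-1)N}{4\pi}\right)^2, \left( \frac{(k-1)N}{4\pi}\right)^2 \right],
$$
which is precisely the interval hypothesized for $n$ in Corollary \ref{D1n}.

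Finally, because $n \geq 2$ we have $n \neq 1 = m$, so the Kronecker diagonal symbol satisfies $\delta(1, n) = 0$, and hence $\Delta_{k,N}(1,n) - \delta(1,n) = \Delta_{k,N}(1,n)$. Applying Remark \ref{rmrk} with $m = 1$ then gives the bound
$$
\left| \Delta_{k,N}(1,n)\right| = \left| \Delta_{k,N}(1,n) - \delta(1,n)\right| \geq \frac{0.002397\pi}{(k-1)^{1/3}} e^{(k-1)\left(1-\frac{4}{9}-\log(\frac{9}{5})\right)},
$$
as claimed.

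There is no substantive obstacle in this argument; the statement is a routine specialization of an already-established estimate. The only points demanding care are the bookkeeping that every hypothesis of Remark \ref{rmrk} is inherited under $m = 1$, the equivalence of the two interval formulations above, and the harmless but essential observation that $\delta(1,n)$ vanishes once $n \geq 2$.
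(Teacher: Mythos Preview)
Your proposal is correct and matches the paper's intended approach: Corollary~\ref{D1n} is stated immediately after Remark~\ref{rmrk} with no separate proof, precisely because it is the $m=1$ specialization you describe, together with the observation that $n\ge 2$ forces $\delta(1,n)=0$. Your verification of the interval equivalence and the inheritance of hypotheses is exactly the routine bookkeeping the paper leaves implicit.
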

\begin{remark}
Keeping equation \eqref{justdelta(1,n)} in mind, we note that Corollary ~\ref{D1n} gives us a lower bound for a weighted trace of Hecke operator $T_n.$
\end{remark}

Now we give a proof of the Theorem~\ref{main theorem sec 2}.
\begin{proof}[\textbf{Proof of Theorem~\ref{main theorem sec 2}}]
    The proof is similar to the proof of Theorem~\ref{main theorem sec 1}. We briefly outline the key points that need to be addressed. Since $k\geq 28,$ we see that equation \eqref{8/9} is satisfied. 
Now using \cite[Theorem 1]{KI}, we get 
\begin{equation*}
    J_{k-1}\left( (k-1)+t(k-1)^{\frac{1}{3}} \right)=\frac{2^{\frac{1}{3}}Ai(-2^{\frac{1}{3}}t)}{(k-1)^\frac{1}{3}}+\theta\frac{(4t^{\frac{9}{4}}+21)}{7(k-1)},
\end{equation*}
  where $|\theta|\leq 1$ and $0\leq t\leq 1.$ Note that for $k \geq 180,$
  $$\left|\frac{\theta(4t^{\frac{9}{4}}+21)}{7(k-1)}\right| = \left|\frac{\theta(4t^{\frac{9}{4}}+21)}{7(k-1)^{1/3}(k-1)^{2/3}}\right|\leq \frac{25}{7\times 179^{2/3} (k-1)^{1/3}} \leq \frac{0.113}{(k-1)^{1/3}}$$ and $2^{\frac{1}{3}}  Ai(-2^{\frac{1}{3}}t)\geq 2^{\frac{1}{3}} Ai(0)\geq 0.44, $ so that 
  $$ J_{k-1}\left( (k-1)+t(k-1)^{\frac{1}{3}} \right)\geq  \frac{0.327}{(k-1)^{\frac{1}{3}}} .$$
  The above inequality plays the role of equation \eqref{D0} in the proof of Theorem~\ref{main theorem sec 1}.
 Using Lemma~\ref{Bessel2}, $J_{k-1}(k-1)\leq \frac{0.448}{(k-1)^{\frac{1}{3}}}+\frac{0.095}{(k-1)^\frac{1}{3}}=\frac{0.543}{(k-1)^{\frac{1}{3}}}.$ Hence, for all $k \geq k_{1}(N),$ we have 
  \begin{equation*}
    \begin{split}
       \frac{|B|}{2\pi} \leq  3.005 e^{H_0(k-1)} J_{k-1}( k-1)\leq e^{H_0(k-1)} \frac{1.632}{(k-1)^{\frac{1}{3}}} \leq  \frac{0.327H(N)}{(k-1)^{\frac{1}{3}}} \leq \left\lvert \frac{ S(m,n;N) }{N}   J_{k-1}\left(\frac{4\pi\sqrt{mn}}{N}\right)\right\rvert .
 \end{split}
\end{equation*}
Using equation \eqref{|B|}, we obtain the following for $k \geq k_1(N).$  
\begin{equation*}
       \begin{split}
           &\left|\Delta_{k,N}(m,n)-\delta(m,n)\right|\\
           =&\left|2\pi i^k \frac{S(m,n;N)}{N}J_{k-1}\left(\frac{4\pi\sqrt{mn}}{N} \right)+B\right|\\
            \geq & 2 \pi \left|  \frac{0.327H(N)}{(k-1)^{\frac{1}{3}}} - e^{H_0(k-1)} \frac{1.632}{(k-1)^{\frac{1}{3}}}\right| \\
           \geq & \frac{2\pi E_0}{(k-1)^{\frac{1}{3}}}.
           \end{split}
   \end{equation*}
The last inequality is true if we have the following for some positive constant $E_0:$
$$ \frac{0.327H(N)}{(k-1)^{\frac{1}{3}}} -   e^{H_0(k-1)} \frac{1.632}{(k-1)^{\frac{1}{3}}}\geq\frac{E_0}{(k-1)^{\frac{1}{3}}},$$
that is,
$$
0.327H(N)-E_0\geq  e^{H_0(k-1)} 1.632,
$$
that is,
$$
e^{-H_0(k-1)} \geq \frac{1.632}{0.327H(N)-E_0},
$$
which is true for $k \geq k_{1}(N),$ where
$$k_{1}(N)= 2+\left\lfloor \frac{\log (1.632)-\log (0.327H(N)-E_0)}{-H_0}\right \rfloor.$$
\end{proof}
  \begin{remark}
      Note that the above theorem can be extended similarly to   $\left[ (k-1), (k-1)+\gamma(k-1)^{\frac{1}{3}} \right]$ with $-2^{\frac{1}{3}}\gamma>a $, where $a=-2.33811\dots$ is the largest zero  of the airy function. This forces $\gamma<1.86.$  Furthermore, it is important to note that we can not apply \cite[Theorem 1]{KI} directly for the interval $\left[  2,(k-1) \right)$ as \cite[Theorem 1]{KI} is valid for $t\geq 0.$ 
  \end{remark}  
\subsection*{Tables for \texorpdfstring{$H(N),k_0(N), k_1(N)$}{}}
In this subsection, we consider some specific values of $N$ and compute the values of  $H(N),k_0(N), k_1(N)$.
Following Remark \ref{rmrk}, we take  $D_0=0.999$ so that 
$A_0=D_0e^{-H_0}\approx 1.0317$ as given in Theorem~\ref{main theorem sec 1}. We recall that $k_0(N)$ and $k_1(N)$ appear in Theorem~\ref{main theorem sec 1} and Theorem~\ref{main theorem sec 2}  respectively.  
Recall that  $N=du$ with $d$ is powerful and $u$ is squarefree, where $(d,u)=1$. 
We have  $H(N)=2^{\omega(N)} \frac{G(N)}{N^{3/2}}$ with $G(N)= \frac{u}{(\tau(u)^{2} u)^{\phi(u)/2\tau(u)}}.$ 
If $N=p^\alpha$ for some prime $p$ and $\alpha>1$, then $H(N)=\frac{2}{p^{\frac{3\alpha}{2}}}.$ If $N$ is squarefree, then $H(N)=\frac{2^{\omega(N)}}{N^{1/2}(\tau(N)^{2}N)^{\phi(N)/2\tau(N)}}.$ If $N=p$ for a prime number, then $H(p)=\frac{2}{p^{1/2}(4p)^{(p-1)/4}}$.
  
We calculate the following table for $C_0=0.326$, $-H_0\geq  0.3223$ and  $D_{0}=0.999.$ Then, $\log A_0 \geq 0.031229.$ 

%\JMcom{There is a change in this table:}

  \begin{center}
\begin{tabular}{|c|c|c|c|}
\hline
    $N$ & $\log H(N)(\geq )$ & $k_{0}(N)= 2+ \left\lfloor\frac{\log 7-\log H(N)}{\log A_0}\right\rfloor(\leq )$  & $k_{1}(N)=2+\left\lfloor \frac{\log (1.632 /C_0)-\log H(N)}{-H_0}\right\rfloor(\leq)$  \\
\hline 
  $1$ &  0  & 64 & 6\\
  \hline 
  $3$ &  -1.0987   &  99 & 10\\
  \hline 
  $5$ &  -3.1074  & 163 & 16\\
    \hline 
  $9$ & -2.6027   & 147 & 15\\
    \hline 
  $49$ & -5.1446   &  229 & 22\\
    \hline 
  $99$ & -12.5690 & 466 & 45\\
    \hline 
  $999$ &   -71.6034 & 2357 & 229\\
    \hline 
  $9999$ &  -1227.9235 & 39384 & 3816\\
    \hline 
  $49999$ &  -152574.2718 &  4885724 & 473399      \\
    \hline 
  $99999$ &  -16325.0518  &  522817& 50658       \\
    \hline 
  $999999$ &  -15030.9423  & 481377 & 46643       \\
\hline
\end{tabular}
\end{center}

\subsection*{Acknowledgements}
 The third-named author acknowledges the support from IISER Pune, Bhaskaracharya Pratishthana (Pune), and NISER Bhubaneswar and his work was also supported at NISER Bhubaneswar by the Department of Atomic Energy (DAE project number RIN-4001).  The second-named author is grateful to COEP Technological University, Pune, for providing an excellent working environment. 
The second  and third named authors would like to thank Kaneenika Sinha for her valuable input. 
We are thankful to the Harish-Chandra Research Institute, Prayagraj, for excellent hospitality, where the parts of the work were carried out. We are grateful to the anonymous referee for pointing out an error in Theorem~\ref{Main1} and consequent results.
\subsection*{Data availability} The authors declare that the manuscript has no associated data.
\bibliographystyle{alpha}
\bibliography{Sato-TateNT.bib}
\end{document}